\documentclass[11pt,leqno]{amsart}

\usepackage[T1]{fontenc}
\usepackage[utf8]{inputenc}
\usepackage{amsmath,amssymb,amsfonts,mathtools}
\usepackage{microtype}
\usepackage[hidelinks]{hyperref}
\usepackage[left=2.cm,right=2.cm,top=2.cm,bottom=2.cm]{geometry}

\newtheorem{theorem}{Theorem}[section]
\newtheorem{lemma}[theorem]{Lemma}
\newtheorem{proposition}[theorem]{Proposition}
\newtheorem{corollary}[theorem]{Corollary}
\newtheorem{assumption}[theorem]{Assumption}
\theoremstyle{definition}
\newtheorem{definition}[theorem]{Definition}
\newtheorem{example}{Example}
\theoremstyle{remark}
\newtheorem{remark}{Remark}
\numberwithin{equation}{section}

\newcommand{\EP}{\operatorname{EP}}
\newcommand{\VI}{\operatorname{VI}}
\newcommand{\Fix}{\operatorname{Fix}}
\newcommand{\dom}{\operatorname{dom}}
\newcommand{\intdom}{\operatorname{int}\operatorname{dom}}
\newcommand{\argmin}{\operatorname*{arg\,min}}
\newcommand{\Dg}{D_g}

\newcommand{\Bproj}{P^g}
\newcommand{\Bmix}{\mathcal{M}^{g}_{\boldsymbol{\alpha}}}
\newcommand{\Sol}{\mathcal{S}}

\begin{document}

\title[Countable generalized nonexpansive-type mappings]{Strong convergence for countable generalized Bregman nonexpansive-type mappings with equilibrium and variational inequality constraints}

\author{\NoCaseChange{Markjoe O. Uba}\\
\small \NoCaseChange{School of Mathematical and Statistical Sciences, Northern Illinois University, DeKalb, Illinois, USA}\\
\small \NoCaseChange{\texttt{markjoeuba@gmail.com}}}

\subjclass[2020]{47H09, 47H10, 47J25, 47J20, 65K15}
\keywords{Bregman barycenter, Legendre function, generalized Bregman $J_*$-nonexpansive mapping, countable constraint system, equilibrium problem, variational inequality, strong convergence}
\date{}

\begin{abstract}
We develop a Legendre--Bregman outer-approximation method for a common-solution problem in a uniformly smooth and uniformly convex Banach space. The constraint system consists of countable families of fixed-point-type mappings, equilibrium bifunctions, and monotone variational-inequality operators. Strong convergence to the Bregman projection of the initial point onto the common solution set is obtained without a family-level NST compatibility condition. A localized theorem covers negative-entropy generators, and Hilbert-space and Alber-functional cases are also derived.
\end{abstract}

\maketitle

\section{Introduction}

Let $E$ be a real Banach space, let $E^*$ be its dual, and let $C\subset E$ be nonempty, closed, and convex. For a mapping $A:C\to E^*$, the associated variational inequality asks for $x^*\in C$ satisfying
\begin{equation}\label{introVI}
\langle y-x^*,Ax^*\rangle\geq0,\qquad y\in C.
\end{equation}
We write $\VI(C,A)$ for the solution set. This formulation goes back to the classical variational-inequality framework of Stampacchia and includes constrained optimization and complementarity models; see \cite{Stampacchia1964}.

For a bifunction $\Theta:C\times C\to\mathbb R$, the equilibrium problem is
\begin{equation}\label{introEP}
\Theta(x^*,y)\geq0,\qquad y\in C,
\end{equation}
and its solution set is denoted by $\EP(\Theta)$. The equilibrium formulation provides a common language for optimization, variational inequalities, saddle-point problems, and related models; see \cite{BlumOettli1994,CombettesHirstoaga2005}.

Strongly convergent hybrid projection methods for common fixed-point, equilibrium, and variational-inequality constraints are often built from Alber's functional
\[
\phi(x,y)=\|x\|^2-2\langle x,Jy\rangle+\|y\|^2,
\]
where $J$ is the normalized duality mapping; see \cite{Alber1996}. The corresponding generalized $J_*$-nonexpansive framework for mappings was used in \cite{UbaCarpathian2023}. Bregman geometry replaces the quadratic functional by
\[
\Dg(x,y)=g(x)-g(y)-\langle\nabla g(y),x-y\rangle,
\]
where $g$ is a Legendre function. This permits the algorithmic geometry to be chosen independently of the norm and includes entropy-type divergences as well as the quadratic Alber case; see \cite{Bregman1967,BauschkeBorwein1997,ButnariuIusem2000,ReichSabach2010}.

The present work belongs to a sequence of related but distinct hybrid constructions. The earlier published scheme \cite{UbaCarpathian2023} uses Alber geometry and treats a countable fixed-point family together with finite equilibrium and variational-inequality families. A companion viscosity--Halpern scheme treats all three families as countable but retains Alber geometry and a family-level NST hypothesis \cite{UbaViscosity2026}. Another companion hybrid scheme adds countably many maximal monotone inclusions, again in Alber geometry \cite{UbaCountableMonotone2026}. The inertial study \cite{UbaInertial2026} develops perturbation-resilient fixed-point iterations in Bregman geometry but does not contain the equilibrium and variational-inequality resolvent blocks considered here.

The contributions of the paper are as follows:
\begin{itemize}
\item the Alber-geometry generalized $J_*$-nonexpansive inequality is extended to a direct Legendre--Bregman formulation for mappings $T:C\to E^*$;
\item a shrinking Bregman projection method is analyzed for countable fixed-point, equilibrium, and variational-inequality families;
\item strong convergence is proved without an NST-type relation between the members of the fixed-point family;
\item a localized convergence theorem permits negative-entropy generators on invariant bounded regions;
\item Hilbert-space and Alber-functional consequences and two entropy examples are given.
\end{itemize}

Section~\ref{sec:problem} gives the setting and algorithm. Section~\ref{sec:main} states the convergence results. Standard Bregman and resolvent facts, together with the paper-specific auxiliary statements, are collected in Section~\ref{sec:prelim}. Section~\ref{sec:examples} contains the entropy examples, and Section~\ref{sec:proofs} contains the proofs.

\section{Problem formulation}\label{sec:problem}

This section specifies the Banach--Bregman setting, the three countable constraint blocks, and the
outer-approximation algorithm. The auxiliary Bregman identities, projection facts, and resolvent
properties used later are collected in Section~\ref{sec:prelim}.

\subsection{Bregman geometry and generalized \texorpdfstring{$J_*$}{J-star}-nonexpansive mappings}

Let $E$ be a uniformly smooth and uniformly convex real Banach space with dual space $E^{*}$. The
duality pairing between $E$ and $E^{*}$ will be denoted by $\langle\cdot,\cdot\rangle$. Let
$J:E\to E^{*}$ be the normalized duality mapping, characterized by
\[
\langle x,Jx\rangle=\|x\|^{2}=\|Jx\|^{2},\qquad x\in E.
\]
Under the standing geometric assumptions on $E$, the standard properties of the normalized duality mapping imply that $J$ is single-valued and bijective; see \cite{Cioranescu1990,Alber1996}. We write
$J_*:=J^{-1}:E^{*}\to E$. Let $g:E\to(-\infty,+\infty]$ be a proper, lower semicontinuous and convex
function. We write
\[
\dom g:=\{x\in E:g(x)<+\infty\},\qquad U:=\intdom g.
\]
Throughout this paper we shall assume that $C\subset U$ is nonempty, closed and convex.

\begin{definition}[Legendre function; see \cite{BauschkeBorwein1997,ButnariuIusem2000}]
Let $g:E\to(-\infty,+\infty]$ be proper, lower semicontinuous, and convex. The function $g$ is called Legendre if it is essentially smooth and essentially strictly convex. In particular, $g$ is differentiable on $U=\intdom g$, and $\nabla g$ is single-valued on $U$.
\end{definition}

For a Legendre function on a reflexive Banach space, Legendre duality gives the bijection
\[
\nabla g:U\longrightarrow U^{*}:=\intdom g^{*}
\]
with inverse $\nabla g^{*}$; see \cite{BauschkeBorwein1997,ButnariuIusem2000}. Since $\dom g^{*}$ is convex, $U^{*}$ is convex. Consequently, every
convex combination of points in $\nabla g(U)$ belongs to $U^{*}$ and may be mapped back to $U$ by
$\nabla g^{*}$.

Following Bregman \cite{Bregman1967}, if $g$ is G\^ateaux differentiable on $U$, the Bregman distance generated by $g$ is defined by
\begin{equation}\label{Dgdef}
\Dg(x,y)=g(x)-g(y)-\langle \nabla g(y),x-y\rangle,
\qquad x\in\dom g,\ y\in U.
\end{equation}
It is well known that $\Dg(x,y)\geq0$ for all $x\in\dom g$ and $y\in U$. However, $\Dg$ is generally
not symmetric and it does not satisfy the triangle inequality.

We shall use the following standing assumptions on $g$.
\begin{itemize}
\item[(G1)] $g$ is a Legendre function which is strongly coercive, meaning that
$g(x)/\|x\|\to+\infty$ as $\|x\|\to\infty$ with $x\in\dom g$, and is bounded on bounded subsets of $U$;
\item[(G2)] $g$ is uniformly Fr\'echet differentiable on bounded subsets of $U$;
\item[(G3)] $g$ is totally convex on bounded subsets of $U$, that is, whenever $\{x_n\}$ and $\{y_n\}$
are bounded sequences in $U$ and $\Dg(x_n,y_n)\to0$, then $\|x_n-y_n\|\to0$;
\item[(G4)] $g^{*}$ is uniformly convex on bounded subsets of $U^{*}=\intdom g^{*}$;
\item[(G5)] for every $x\in U$ and $R>0$, the forward and reverse Bregman sublevel sets
\[
\{z\in U:\Dg(z,x)\leq R\},
\qquad
\{z\in U:\Dg(x,z)\leq R\}
\]
are bounded; moreover, $\nabla g$ maps these sublevel sets into bounded subsets of $E^{*}$, while
$\nabla g^{*}$ maps bounded subsets of $U^{*}$ into bounded subsets of $E$ and is uniformly continuous
on bounded subsets of $U^{*}$.
\end{itemize}
By the standard bounded-set differentiability criterion, boundedness of $g$ together with uniform
Fr\'echet differentiability on bounded subsets entails norm-to-norm uniform continuity of $\nabla g$
on those subsets; see, for example, \cite{ButnariuIusem2000}. Consequently, $(G1)$--$(G2)$ yield
uniform continuity of $\nabla g$ on each bounded subset of $U$. In particular, both $g$ and $\nabla g$ are
continuous on $U$, and for every fixed $p\in U$ the map $y\mapsto \Dg(p,y)$ is continuous on $U$.

\begin{remark}
For applications in
which the iteration is known to remain in a bounded region of $U$, such as the truncated-simplex entropy
examples below, only local regularity and convexity are needed. A precise localized counterpart of
Theorem~\ref{mainthm} is stated and proved in Corollary~\ref{localmainthm}.
\end{remark}

\begin{definition}[Bregman projection; see \cite{Bregman1967,AlberButnariu1997,BauschkeBorwein1997}]
Let $D\subset U$ be nonempty, closed, and convex, and let $x\in U$. A \emph{Bregman projection} of $x$ onto $D$ is a minimizer of $z\mapsto\Dg(z,x)$ over $D$. Whenever the minimizer is unique, it is denoted by
\[
\Bproj_Dx:=\argmin_{z\in D}\Dg(z,x).
\]
\end{definition}

\begin{definition}[Weighted Bregman mixing map]\label{barydef}
Let $x_1,x_2,x_3\in U$ and let
$\boldsymbol{\alpha}=(\alpha_1,\alpha_2,\alpha_3)\in(0,1)^3$ satisfy
$\alpha_1+\alpha_2+\alpha_3=1$. The associated three-point Bregman mixing map is
\[
\Bmix(x_1,x_2,x_3)
:=\nabla g^{*}\bigl(\alpha_1\nabla g(x_1)+\alpha_2\nabla g(x_2)+\alpha_3\nabla g(x_3)\bigr).
\]
\end{definition}

\subsection{Equilibrium and variational inequality components}

\begin{definition}[$J$-fixed point; see \cite{UbaCarpathian2023}]\label{Jfixeddef}
Let $T:C\to E^{*}$ satisfy $J_*T(C)\subset C$. A point $p\in C$ is called a $J$-fixed point of $T$ if
\[
Tp=Jp.
\]
The set of all $J$-fixed points of $T$ is denoted by $F_J(T)$.
\end{definition}

The next definition is the direct Legendre--Bregman extension of the generalized $J_*$-nonexpansive inequality used in the Alber-functional setting of \cite{UbaCarpathian2023}.
\begin{definition}\label{genBregJstar}
A mapping $T:C\to E^{*}$ satisfying $J_*T(C)\subset C$ is called generalized Bregman $J_*$-nonexpansive if $F_J(T)\neq\emptyset$ and
\[
\Dg(p,J_*Tx)\leq \Dg(p,x),
\qquad x\in C,\ p\in F_J(T).
\]
\end{definition}

\begin{remark}\label{JstarBregremark}
If $g(x)=\frac12\|x\|^2$, then $\nabla g=J$ and $\Dg=\frac12\phi$. Hence Definition~\ref{genBregJstar} becomes
\[
\phi(p,J_*Tx)\leq\phi(p,x),
\qquad x\in C,\ p\in F_J(T),
\]
which is precisely the generalized $J_*$-nonexpansive condition in Alber geometry used in \cite{UbaCarpathian2023}. We retain the direct notation $J_*Tx$ throughout so that the mapping $T:C\to E^*$ remains explicit in both the algorithm and the convergence proof.
\end{remark}

\begin{definition}[$J_*$-closed mapping; compare \cite{UbaCarpathian2023}]\label{Jstarcloseddef}
A mapping $T:C\to E^{*}$ satisfying $J_*T(C)\subset C$ is called $J_*$-closed if, whenever $x_n\to x$ in $C$ and $J_*Tx_n\to y$ in $E$, one has
\[
y=J_*Tx.
\]
\end{definition}

Let $\Theta:C\times C\to\mathbb R$ be a bifunction. We use the standard equilibrium assumptions of \cite{BlumOettli1994,CombettesHirstoaga2005,TakahashiZembayashi2008}:
\begin{itemize}
\item[(E1)] $\Theta(x,x)=0$ for all $x\in C$;
\item[(E2)] $\Theta$ is monotone, that is,
\[
\Theta(x,y)+\Theta(y,x)\leq0,
\qquad x,y\in C;
\]
\item[(E3)] for all $x,y,z\in C$,
\[
\limsup_{t\downarrow0}\Theta(tz+(1-t)x,y)\leq \Theta(x,y);
\]
\item[(E4)] for each $x\in C$, $\Theta(x,\cdot)$ is convex and lower semicontinuous on $C$.
\end{itemize}

\begin{remark}
If a bifunction is first specified in gradient coordinates as
$\widehat\Theta:\nabla g(C)\times\nabla g(C)\to\mathbb R$, we use only its primal pullback
\[
\Theta(x,y):=\widehat\Theta(\nabla g(x),\nabla g(y)),\qquad x,y\in C.
\]
Thus the theorem requires $(E1)$--$(E4)$ for $\Theta$ on the convex set $C$.
\end{remark}

Motivated by the standard equilibrium and Bregman-resolvent constructions in \cite{CombettesHirstoaga2005,TakahashiZembayashi2008,ReichSabach2010}, for $r>0$ and $x\in C$, let
\begin{equation}\label{Eresolvent}
\mathcal Z_r^{\Theta}(x):=
\left\{z\in C:
\Theta(z,y)+\frac1r\langle y-z,\nabla g(z)-\nabla g(x)\rangle\geq0
\ \text{for every }y\in C\right\},
\end{equation}
and, for a monotone mapping $A:C\to E^{*}$,
\begin{equation}\label{VIresolvent}
\mathcal W_r^{A}(x):=
\left\{z\in C:
\langle y-z,Az\rangle+\frac1r\langle y-z,\nabla g(z)-\nabla g(x)\rangle\geq0
\ \text{for every }y\in C\right\}.
\end{equation}
The notation deliberately distinguishes the candidate sets in \eqref{Eresolvent}--\eqref{VIresolvent} from the resolvent mappings that arise after uniqueness has been established.

\begin{assumption}\label{resolventprops}
For every individual bifunction $\Theta$ and every individual norm-continuous monotone mapping $A$
selected from the countable families below, every $x\in C$, and every $r>0$, the sets
\[
\mathcal Z_r^{\Theta}(x)\quad\text{and}\quad\mathcal W_r^{A}(x)
\]
are nonempty.
\end{assumption}

\subsection{Common-solution problem and iterative method}

Let $\{\Theta_l\}_{l\geq1}$ be a countable family of bifunctions from $C\times C$ into
$\mathbb R$, each satisfying $(E1)-(E4)$. Let $\{A_k\}_{k\geq1}$ be a countable family of
norm-continuous monotone mappings from $C$ into $E^{*}$, and let $\{T_j\}_{j\geq1}$ be a countable
family of $J_*$-closed generalized Bregman $J_*$-nonexpansive mappings from $C$ into $E^{*}$. Define
\begin{equation}\label{commonset}
\Sol:=\left(\bigcap_{j=1}^{\infty}F_J(T_j)\right)
\cap\left(\bigcap_{l=1}^{\infty}\EP(\Theta_l)\right)
\cap\left(\bigcap_{k=1}^{\infty}\VI(C,A_k)\right).
\end{equation}
The preliminary set-structure result in Section~\ref{sec:prelim} shows that $\Sol$ is closed and convex whenever it is nonempty.% We therefore

Let $\alpha_1,\alpha_2,\alpha_3\in(0,1)$ satisfy
\[
\alpha_1+\alpha_2+\alpha_3=1,
\]
and let $\{r_n\}\subset[r_0,\infty)$ for some $r_0>0$. Choose recurrent control mappings
\[
\tau_T,\tau_A,\tau_\Theta:\mathbb N\to\mathbb N
\]
such that, for every $j,k,l\in\mathbb N$, the sets
\[
\{n\in\mathbb N:\tau_T(n)=j\},\qquad
\{n\in\mathbb N:\tau_A(n)=k\},\qquad
\{n\in\mathbb N:\tau_\Theta(n)=l\}
\]
are infinite. Define
\[
\widehat T_n:=T_{\tau_T(n)},\qquad \widehat A_n:=A_{\tau_A(n)},\qquad \widehat\Theta_n:=\Theta_{\tau_\Theta(n)}.
\]
Given $x_1=x\in C$ and $C_1=C$, define the sequence $\{x_n\}$ by
\begin{equation}\label{alg}
\begin{cases}
z_n=R_{r_n}^{\widehat\Theta_n}x_n,\\
u_n=Q_{r_n}^{\widehat A_n}x_n,\\
y_n=\Bmix(x_n,z_n,J_*\widehat T_nu_n),\\
C_{n+1}=\{z\in C_n:\Dg(z,y_n)\leq\Dg(z,x_n)\},\\
x_{n+1}=\Bproj_{C_{n+1}}x,
\end{cases}
\end{equation}
for all $n\geq1$.

\section{Main results and corollaries}\label{sec:main}

We now state the well-definedness result, the principal strong-convergence theorem, its localized
counterpart, and the immediate Hilbert-space and Alber-functional consequences.
Their proofs are deferred to Section~\ref{sec:proofs}.

\subsection{Convergence results}

\begin{lemma}\label{welldefined}
Let $E$, $C$, $g$, the families
$\{\Theta_l\}_{l\geq1}$, $\{A_k\}_{k\geq1}$, and
$\{T_j\}_{j\geq1}$, the parameters
$\alpha_1,\alpha_2,\alpha_3$, $\{r_n\}$, and the control mappings
$\tau_T$, $\tau_A$, and $\tau_\Theta$ be as specified in
Section~\ref{sec:problem}. Assume that $\Sol\neq\emptyset$ and that
Assumption~\ref{resolventprops} holds for every $\Theta_l$ and every
$A_k$. Then the sequence $\{x_n\}$ generated by \eqref{alg} is well
defined. Moreover,
\[
\Sol\subset C_n,\qquad n\geq1.
\]
\end{lemma}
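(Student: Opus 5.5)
We argue by induction on $n$, proving simultaneously that $x_n$ is well defined in $C$, that $C_n$ is nonempty, closed and convex, and that $\Sol\subset C_n$. The base case is immediate, since $C_1=C\supset\Sol$ and $x_1=x\in C$.

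Before the induction we record three facts.

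\emph{(i) The resolvents are single-valued and fix the solution set.} By Assumption~\ref{resolventprops}, the sets $\mathcal Z_r^{\Theta}(x)$ and $\mathcal W_r^{A}(x)$ are nonempty. To see that each is a singleton, take two candidates $z,z'$, insert $y=z'$ in one defining inequality and $y=z$ in the other, and add. Monotonicity (E2), respectively monotonicity of $A$, then gives $\langle z-z',\nabla g(z)-\nabla g(z')\rangle\le0$. Strict monotonicity of $\nabla g$, which follows from essential strict convexity of the Legendre function $g$, forces $z=z'$. So $R_r^{\Theta}$ and $Q_r^{A}$ are well-defined single-valued maps into $C$.

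Next, let $p\in\EP(\Theta)$ and $z=R_r^{\Theta}x$. Taking $y=p$ in \eqref{Eresolvent} and using $\Theta(z,p)\le-\Theta(p,z)\le0$ yields $\langle p-z,\nabla g(z)-\nabla g(x)\rangle\ge0$. The Bregman three-point identity
\[
\Dg(p,x)=\Dg(p,z)+\Dg(z,x)+\langle p-z,\nabla g(z)-\nabla g(x)\rangle
\]
then gives
\[
\Dg(p,z)+\Dg(z,x)\le\Dg(p,x),
\]
so in particular $\Dg(p,z)\le\Dg(p,x)$. The variational-inequality resolvent is handled the same way. For $p\in\VI(C,A)$, monotonicity gives $\langle p-u,Au\rangle\le\langle p-u,Ap\rangle\le0$, hence $\Dg(p,Q_r^Ax)\le\Dg(p,x)$.

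\emph{(ii) A convexity inequality for the mixing map.} For $p\in U$ and $y=\Bmix(x_1,x_2,x_3)$, expand $\Dg(p,y)=g(p)-\langle p,\nabla g(y)\rangle+g^*(\nabla g(y))$ using the Fenchel identity $g(y)+g^*(\nabla g(y))=\langle y,\nabla g(y)\rangle$. Convexity of $g^*$ then gives
\[
\Dg(p,y)\le\sum_i\alpha_i\Dg(p,x_i).
\]

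\emph{(iii) The sets $C_{n+1}$ are closed and convex.} The condition $\Dg(z,y_n)\le\Dg(z,x_n)$ expands to
\[
\langle z,\nabla g(x_n)-\nabla g(y_n)\rangle\le g(x_n)-g(y_n)+\langle y_n,\nabla g(y_n)\rangle-\langle x_n,\nabla g(x_n)\rangle,
\]
which is a closed half-space (or all of $E$) in $z$. Hence $C_{n+1}$ is the intersection of $C_n$ with a closed convex set.

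\emph{Inductive step.} Assume $x_n\in C$ is defined and $\Sol\subset C_n$. By (i), $z_n,u_n\in C$ are defined, and $J_*\widehat T_nu_n\in C\subset U$ because $J_*T(C)\subset C$. The gradients of all three arguments of the mixing map lie in $U^*$, and $U^*$ is convex, so $y_n\in U$ is defined.

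Now fix $p\in\Sol$. We have $p\in F_J(\widehat T_n)\cap\EP(\widehat\Theta_n)\cap\VI(C,\widehat A_n)$. Combining (ii), (i) and Definition~\ref{genBregJstar} gives
\[
\Dg(p,y_n)\le\alpha_1\Dg(p,x_n)+\alpha_2\Dg(p,z_n)+\alpha_3\Dg(p,u_n)\le\Dg(p,x_n).
\]
Thus $p\in C_{n+1}$, so $\Sol\subset C_{n+1}$. In particular $C_{n+1}$ is nonempty, and by (iii) it is closed and convex.

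It remains to show that $\Bproj_{C_{n+1}}x$ exists and is unique. Existence follows from the standard Bregman projection fact: $z\mapsto\Dg(z,x)$ is lower semicontinuous, convex and coercive, the last because its sublevel sets are bounded by (G5) or by strong coercivity (G1). In a reflexive space this gives a minimizer on the nonempty closed convex set $C_{n+1}$. Uniqueness follows from strict convexity of $g$ on $C\subset U$. This closes the induction.

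I expect the only delicate points to be justifying single-valuedness of the resolvents, which rests on strict monotonicity of $\nabla g$, and the mixing-map inequality in (ii). Everything else is bookkeeping.
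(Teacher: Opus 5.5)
Your proof is correct and follows the same induction as the paper. The only difference is that you prove inline the resolvent Fej\'er estimates, the Jensen-type mixing inequality, and the existence of Bregman projections, which the paper takes from Proposition~\ref{resolventconsequences}, Lemma~\ref{convexdual}, and Lemma~\ref{BprojExistence}.

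There is one harmless slip in (iii): the constant on the right of your half-space inequality has the wrong sign. It should read $g(y_n)-g(x_n)+\langle x_n,\nabla g(x_n)\rangle-\langle y_n,\nabla g(y_n)\rangle$, but only the affine dependence on $z$ is used, so the conclusion stands.
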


\begin{theorem}\label{mainthm}
Let $E$ be a uniformly smooth and uniformly convex real Banach space and let $C\subset U$ be a
nonempty closed and convex subset of $E$. Let $g:E\to(-\infty,+\infty]$ satisfy $(G1)-(G5)$. Let
$\{\Theta_l\}_{l\geq1}$ be a countable family of bifunctions from $C\times C$ into $\mathbb R$, each
satisfying $(E1)-(E4)$, let $\{A_k\}_{k\geq1}$ be a countable family of norm-continuous monotone
mappings from $C$ into $E^{*}$, and let $\{T_j\}_{j\geq1}$ be a countable family of $J_*$-closed
generalized Bregman $J_*$-nonexpansive mappings from $C$ into $E^{*}$. Assume that
Assumption~\ref{resolventprops} holds for every $\Theta_l$ and every $A_k$, that $\Sol$ defined by
\eqref{commonset} is nonempty, and that the recurrent controls $\tau_T$, $\tau_A$, and
$\tau_\Theta$ activate every index infinitely often as specified above. Then the sequence $\{x_n\}$ generated
by \eqref{alg} converges strongly to $\Bproj_{\Sol}x$.
\end{theorem}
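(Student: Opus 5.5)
We follow the standard shrinking-projection (CQ-type) argument, adapted to Bregman geometry, and use Lemma~\ref{welldefined} for well-definedness and for $\Sol\subset C_n$.

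\textbf{Step 1: monotonicity and boundedness.} Since $C_{n+1}\subset C_n$ and $x_{n+1}=\Bproj_{C_{n+1}}x$, the Bregman three-point inequality for projections gives
\[
\Dg(x_{n+1},x_n)+\Dg(x_n,x)\le \Dg(x_{n+1},x).
\]
Hence $\{\Dg(x_n,x)\}$ is nondecreasing. Fix $p=\Bproj_{\Sol}x$. Because $p\in\Sol\subset C_{n+1}$, we also have $\Dg(x_{n+1},x)\le\Dg(p,x)$. So $\{\Dg(x_n,x)\}$ converges, and by (G5) the sequence $\{x_n\}$ is bounded. The same inequality then yields $\Dg(x_{n+1},x_n)\to0$. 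More generally, for $m>n$ we have $x_m\in C_n$, so $\Dg(x_m,x_n)\le \Dg(x_m,x)-\Dg(x_n,x)\to0$. Total convexity (G3) then shows that $\{x_n\}$ is Cauchy, and hence $x_n\to\bar x\in C$.

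\textbf{Step 2: asymptotic regularity of the three blocks.} Since $x_{n+1}\in C_{n+1}$, we get $\Dg(x_{n+1},y_n)\le\Dg(x_{n+1},x_n)\to0$. Boundedness of $\{y_n\}$ follows from Step 3's estimate applied to $p$, together with (G5). By (G3), $\|x_{n+1}-y_n\|\to0$, so $y_n\to\bar x$. By uniform continuity of $\nabla g$ on bounded sets, $\|\nabla g(y_n)-\nabla g(x_n)\|\to0$.

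To separate the three pieces, write $w_n=J_*\widehat T_nu_n$. We use the uniform convexity of $g^*$ (G4) through its gauge $\rho$. Evaluating $\Dg(p,y_n)$ through the conjugate and using convexity with a strict gap, we obtain
\[
\Dg(p,y_n)\le \alpha_1\Dg(p,x_n)+\alpha_2\Dg(p,z_n)+\alpha_3\Dg(p,w_n)-\alpha_1\alpha_2\,\rho\bigl(\|\nabla g(x_n)-\nabla g(z_n)\|\bigr),
\]
and the analogous inequality holds with the pair $(x_n,w_n)$. Next we invoke the resolvent facts from Section~\ref{sec:prelim}, namely the Bregman firm nonexpansivity
\[
\Dg(p,R_rx)+\Dg(R_rx,x)\le\Dg(p,x),
\]
and similarly for $Q_r$, together with Definition~\ref{genBregJstar}. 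These give $\Dg(p,w_n)\le\Dg(p,u_n)\le\Dg(p,x_n)$ and $\Dg(p,z_n)\le\Dg(p,x_n)$. We therefore obtain
\[
\alpha_1\alpha_2\rho(\cdot)\le \Dg(p,x_n)-\Dg(p,y_n).
\]
The right-hand side tends to $0$ by continuity of $\Dg(p,\cdot)$, since $x_n,y_n\to\bar x$. Using $\rho$-properties and uniform continuity of $\nabla g^*$, it follows that $z_n\to\bar x$ and $w_n\to\bar x$. Feeding this back gives $\Dg(u_n,x_n)\le\Dg(p,x_n)-\Dg(p,u_n)$. The right-hand side tends to $0$ because $\Dg(p,u_n)$ is squeezed between $\Dg(p,w_n)$ and $\Dg(p,x_n)$, both of which converge to $\Dg(p,\bar x)$. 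Hence $u_n\to\bar x$ as well.

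\textbf{Step 3: $\bar x\in\Sol$.} This is the main obstacle, because each index is activated only along a subsequence and no NST condition is available. The key is that all the convergences in Step 2 hold for the full sequence, uniformly in the active index.

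For fixed $j$, choose the infinite set $N_j=\{n:\tau_T(n)=j\}$. Along $n\in N_j$ we have $u_n\to\bar x$ and $J_*T_ju_n=w_n\to\bar x$. The $J_*$-closedness of $T_j$ then gives $J_*T_j\bar x=\bar x$, that is, $\bar x\in F_J(T_j)$.

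For fixed $l$, along $n$ with $\tau_\Theta(n)=l$, the resolvent inequality combined with (E2) gives
\[
\tfrac1{r_n}\langle y-z_n,\nabla g(z_n)-\nabla g(x_n)\rangle\ge\Theta_l(y,z_n).
\]
The left-hand side tends to $0$ because $r_n\ge r_0$ and $\|\nabla g(z_n)-\nabla g(x_n)\|\to0$. Lower semicontinuity (E4) then gives $\Theta_l(y,\bar x)\le0$ for every $y\in C$. The usual Minty-type argument via (E1), (E3), (E4) yields $\bar x\in\EP(\Theta_l)$.

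For fixed $k$, the same argument with monotonicity of $A_k$ gives $\langle y-\bar x,A_ky\rangle\ge0$ for all $y\in C$. Continuity of $A_k$ and Minty's lemma then give $\bar x\in\VI(C,A_k)$.

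\textbf{Step 4: identification of the limit.} From Step 1, $\Dg(x_n,x)\le\Dg(p,x)$. Lower semicontinuity of $\Dg(\cdot,x)$ gives $\Dg(\bar x,x)\le\Dg(p,x)$. Since $\bar x\in\Sol$ and the projection onto the closed convex set $\Sol$ is unique, we conclude $\bar x=p=\Bproj_{\Sol}x$.
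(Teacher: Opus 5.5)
Your proposal is correct and follows the paper's proof in all essentials: shrinking-projection monotonicity with total convexity for the Cauchy property, a strict dual-convexity estimate from (G4) for the fixed-point residual, recurrent activation with $J_*$-closedness and Minty-type limits for $\bar x\in\Sol$, and uniqueness of $\Bproj_{\Sol}x$ for the identification. The only variation is in Step~2. You apply the strict estimate to both pairs $(x_n,z_n)$ and $(x_n,w_n)$ and recover $u_n$ from the sandwich $\Dg(p,w_n)\le\Dg(p,u_n)\le\Dg(p,x_n)$. The paper instead obtains $z_n,u_n\to x^*$ from the plain convex-combination and resolvent inequalities and reserves the strict estimate for $(x_n,w_n)$. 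As the paper does, you should also state explicitly that the reverse-sublevel and gradient-boundedness clauses of (G5) keep $\{\nabla g(x_n)\}$, $\{\nabla g(z_n)\}$ and $\{\nabla g(w_n)\}$ in one bounded subset of $U^{*}$, since that is what makes a single modulus $\rho$ and the uniform continuity of $\nabla g^{*}$ available.
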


\begin{corollary}[Localized convergence theorem]\label{localmainthm}
Assume all the hypotheses of Theorem~\ref{mainthm} concerning the space $E$,
the constraint set $C$, the families $\{\Theta_l\}_{l\geq1}$,
$\{A_k\}_{k\geq1}$, and $\{T_j\}_{j\geq1}$, the componentwise resolvent
solvability, the common solution set $\Sol$, the recurrent controls, and the
algorithmic parameters. Replace only the assumptions $(G1)$--$(G5)$ on the
Bregman generator $g$ by the following localized conditions. Suppose that $C$ is bounded and that there are bounded sets $K_0,K_1\subset U$
and a bounded convex set $K^{*}\subset U^{*}$ such that
\begin{itemize}
\item[(L1)] $C\subset K_0$, $\nabla g(K_0)\subset K^{*}$, and
\[
\Bmix(K_0\times K_0\times K_0)\subset K_1,
\]
\item[(L2)] $g$ and $\nabla g$ are continuous on a bounded subset of $U$ containing $K_0\cup K_1$,
$\nabla g$ is uniformly continuous on $K_0$, and $g$ is totally convex along sequences in
$K_0\cup K_1$, in the sense that for any sequences $\{\zeta_n\},\{\eta_n\}\subset K_0\cup K_1$,
\[
\Dg(\zeta_n,\eta_n)\to0\quad\Longrightarrow\quad \|\zeta_n-\eta_n\|\to0;
\]
\item[(L3)] $g^{*}$ is uniformly convex on $K^{*}$, and $\nabla g^{*}$ is bounded and uniformly
continuous on $K^{*}$.
\end{itemize}
Then the sequence generated by \eqref{alg} converges strongly to $\Bproj_{\Sol}x$.
\end{corollary}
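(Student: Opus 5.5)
The plan is to run the proof of Theorem~\ref{mainthm} again, checking at each step that only the localized properties (L1)--(L3) are used, and that every point the argument touches lies in $K_0\cup K_1$ (primal side) or in $K^{*}$ (dual side). The first task is to establish this invariance. By construction $x_n\in C_n\subset C\subset K_0$. The resolvent points satisfy $z_n,u_n\in C$, and $J_*\widehat T_nu_n\in C$ because $J_*T(C)\subset C$. Hence all three arguments of $\Bmix$ lie in $K_0$, and (L1) gives $y_n\in K_1$. Every candidate point $p\in\Sol$ also lies in $C\subset K_0$. Boundedness of $\{x_n\}$, which the global proof obtained from coercivity and sublevel sets (G1), (G5), is now automatic because $C$ is bounded. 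Likewise $\nabla g(x_n),\nabla g(z_n),\nabla g(J_*\widehat T_nu_n)\in K^{*}$, and the convex combination defining $\nabla g(y_n)$ stays in $K^{*}$ by convexity of $K^{*}$.

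Next I would verify well-definedness, i.e.\ Lemma~\ref{welldefined} under the new hypotheses. The sets $C_{n+1}$ are closed and convex because $\Dg(z,y_n)\le\Dg(z,x_n)$ is equivalent to an affine inequality in $z$:
\[
\langle z,\nabla g(x_n)-\nabla g(y_n)\rangle\le g(y_n)-g(x_n)+\langle y_n,\nabla g(y_n)\rangle-\langle x_n,\nabla g(x_n)\rangle .
\]
Closedness only uses continuity of $g$ and $\nabla g$ near $K_0\cup K_1$, which (L2) provides. Existence of the Bregman projection onto a closed convex subset of the bounded set $C$ follows from lower semicontinuity and convexity of $z\mapsto\Dg(z,x)$ together with reflexivity. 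Uniqueness follows from strict convexity of $g$, which is part of being Legendre. The inclusion $\Sol\subset C_n$ needs two ingredients:
\begin{itemize}
\item the three-point estimate $\Dg(p,y_n)\le\alpha_1\Dg(p,x_n)+\alpha_2\Dg(p,z_n)+\alpha_3\Dg(p,J_*\widehat T_nu_n)$, which holds by convexity of $g^{*}$ via $\Dg(p,\nabla g^{*}\xi)=g(p)-\langle p,\xi\rangle+g^{*}(\xi)$;
\item the resolvent nonexpansiveness $\Dg(p,R_r x)\le\Dg(p,x)$, $\Dg(p,Q_rx)\le\Dg(p,x)$, together with Definition~\ref{genBregJstar}.
\end{itemize}
Both are algebraic and unaffected by localization.

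For convergence I would repeat the standard chain. From $x_{n+1}\in C_{n+1}\subset C_n$ and the projection characterization, the sequence $\Dg(x_n,x)$ is nondecreasing, and it is bounded because $g$ is bounded on the bounded set $K_0$. The three-point identity then gives $\Dg(x_{n+1},x_n)\to0$, and (L2) total convexity on $K_0$ yields $\|x_{n+1}-x_n\|\to0$. Since $x_{n+1}\in C_{n+1}$ we have $\Dg(x_{n+1},y_n)\le\Dg(x_{n+1},x_n)\to0$, so $\|x_{n+1}-y_n\|\to0$ by total convexity on $K_0\cup K_1$, and hence $\|x_n-y_n\|\to0$. Uniform continuity of $\nabla g$ on $K_0$ then gives $\|\nabla g(x_n)-\nabla g(y_n)\|\to0$.

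The step I expect to be the main obstacle is extracting $\|x_n-z_n\|\to0$ and $\|x_n-J_*\widehat T_nu_n\|\to0$ from the barycenter. The global proof presumably uses uniform convexity of $g^{*}$ through a gauge $\rho$ with
\[
g^{*}\Bigl(\sum\alpha_i\xi_i\Bigr)\le\sum\alpha_ig^{*}(\xi_i)-\alpha_1\alpha_2\,\rho(\|\xi_1-\xi_2\|).
\]
Here I would invoke (L3) on $K^{*}$, which contains all the $\xi_i$. Combining this with $\Dg(p,y_n)-\Dg(p,x_n)\to0$, which follows from the norm/gradient convergences and boundedness, gives $\|\nabla g(x_n)-\nabla g(z_n)\|\to0$ and similarly for the $T$-term. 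Uniform continuity of $\nabla g^{*}$ on $K^{*}$ then returns primal convergence. For $u_n$ I would use the resolvent estimate $\Dg(u_n,x_n)\le\Dg(p,x_n)-\Dg(p,u_n)$ to get $\|u_n-x_n\|\to0$ through (L2).

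The identification of strong cluster points in $\Sol$ is unchanged. It proceeds via recurrence of the control maps, $J_*$-closedness, (E1)--(E4) and norm-continuity of $A_k$, exactly as in the proof of Theorem~\ref{mainthm}. The final squeeze, $\Dg(x_n,x)\le\Dg(\Bproj_\Sol x,x)$ together with total convexity, gives $x_n\to\Bproj_\Sol x$.
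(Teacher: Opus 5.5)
Your plan is the paper's: rerun the proof of Theorem~\ref{mainthm} after checking that $x_n,z_n,u_n,J_*\widehat T_nu_n\in C\subset K_0$, that $y_n\in K_1$, and that all dual points lie in $K^{*}$. However, the convergence chain as written has a gap at its first link. You only extract $\Dg(x_{n+1},x_n)\to0$, hence $\|x_{n+1}-x_n\|\to0$, and then argue through ``strong cluster points'' and a final squeeze. In a general uniformly convex, uniformly smooth $E$ with $C$ merely bounded, asymptotic regularity yields neither a norm limit nor even a norm cluster point of $\{x_n\}$.

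More importantly, the identification step you import ``exactly as in the proof of Theorem~\ref{mainthm}'' needs the \emph{whole} sequence to converge. For fixed $j$ the control only guarantees $\widehat T_n=T_j$ on the set $\{n:\tau_T(n)=j\}$, and $J_*$-closedness must be applied along that set (similarly for $\tau_A$ and $\tau_\Theta$). If you only knew $x_{n_i}\to\bar x$ along some subsequence, that control set could miss $\{n_i\}$ altogether. Since the controls are merely recurrent, with no bounded gaps, $\|x_{n+1}-x_n\|\to0$ gives you no way to transfer the information.

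The missing step is Step~1 of the paper. For all $m>n$, $x_m\in C_m\subset C_n$ and Lemma~\ref{Bprojlemma} give $\Dg(x_m,x_n)\le\Dg(x_m,x)-\Dg(x_n,x)\to0$. Then (L2), applied by contradiction along pairs $m_k>n_k$ in $K_0$, makes $\{x_n\}$ Cauchy, so $x_n\to x^{*}\in C$. After that, $y_n,z_n,u_n\to x^{*}$, and the identification runs along every control subsequence.

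This also repairs your next step, which is not justified as stated. You get $\|\nabla g(x_n)-\nabla g(y_n)\|\to0$ from uniform continuity of $\nabla g$ on $K_0$, but $y_n\in K_1$, and (L2) gives uniform continuity only on $K_0$. Once $x_n\to x^{*}$ and $y_n\to x^{*}$, plain continuity of $g$ and $\nabla g$ on the bounded set containing $K_0\cup K_1$ gives $\Dg(p,x_n)-\Dg(p,y_n)\to0$ directly. This is exactly how the paper localizes this step.

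Two minor points:
\begin{itemize}
\item Boundedness of $\{\Dg(x_n,x)\}$ should come from $\Dg(x_n,x)\le\Dg(p,x)$ with $p\in\Sol\subset C_n$, since boundedness of $g$ on $K_0$ is not among (L1)--(L3).
\item The right-hand side of your affine description of $C_{n+1}$ should be $g(y_n)-g(x_n)+\langle x_n,\nabla g(x_n)\rangle-\langle y_n,\nabla g(y_n)\rangle$. The sign slip does not affect closedness or convexity.
\end{itemize}
With these repairs, the rest of your argument goes through as in the paper: the residual extraction via strict convexity of $g^{*}$ on $K^{*}$, the resolvent estimate for $u_n$, and uniform continuity of $\nabla g^{*}$ on $K^{*}$.
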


\begin{corollary}[Hilbert space case]
Let $E=H$ be a real Hilbert space and let $g(x)=\frac12\|x\|^{2}$. Under the standard Riesz
identification of $H$ with $H^{*}$, one has $J=J_*=I$ and $\nabla g=I$. Hence
$\Dg(x,y)=\frac12\|x-y\|^{2}$, and the algorithm \eqref{alg} becomes
\[
\begin{cases}
z_n=R_{r_n}^{\widehat\Theta_n}x_n,\\
u_n=Q_{r_n}^{\widehat A_n}x_n,\\
y_n=\alpha_1x_n+\alpha_2z_n+\alpha_3T_{\tau_T(n)}u_n,\\
C_{n+1}=\{z\in C_n:\|z-y_n\|\leq\|z-x_n\|\},\\
x_{n+1}=P_{C_{n+1}}x.
\end{cases}
\]
Under the corresponding assumptions of Theorem \ref{mainthm}, $\{x_n\}$ converges strongly to
$P_{\Sol}x$.
\end{corollary}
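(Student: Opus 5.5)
The Hilbert-space corollary follows from Theorem~\ref{mainthm} by specialization. First I will check that $E=H$ with $g=\frac12\|\cdot\|^2$ satisfies the standing hypotheses; then I will confirm that algorithm \eqref{alg} reduces to the displayed scheme.

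\emph{The space and the generator.} A real Hilbert space is uniformly convex and uniformly smooth by the parallelogram law. Under the Riesz identification, $J=J_*=I$. For $g=\frac12\|\cdot\|^2$ we have $\dom g=U=H$, and $g$ is Legendre with $\nabla g=I$ and $g^*=\frac12\|\cdot\|^2$, so $\nabla g^*=I$ and $U^*=H$. Condition (G1) holds because $g(x)/\|x\|=\|x\|/2\to\infty$ and $g$ is bounded on bounded sets. For (G2), $\nabla g=I$ is uniformly continuous, and indeed $g$ is uniformly Fr\'echet differentiable everywhere. For (G3), the identity
\[
\Dg(x,y)=\tfrac12\|x-y\|^2
\]
gives total convexity directly. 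Condition (G4) holds because $g^*=\frac12\|\cdot\|^2$ is uniformly convex. For (G5), the forward and reverse sublevel sets are balls, the identity maps bounded sets to bounded sets, and the identity is uniformly continuous.

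\emph{The algorithm.} With $\nabla g=\nabla g^*=I$, the mixing map is
\[
\Bmix(x_1,x_2,x_3)=\alpha_1x_1+\alpha_2x_2+\alpha_3x_3,
\]
and $J_*\widehat T_nu_n=T_{\tau_T(n)}u_n$. The inequality $\Dg(z,y_n)\le\Dg(z,x_n)$ is equivalent to $\|z-y_n\|\le\|z-x_n\|$. The Bregman projection $\Bproj_D$ minimizes $\frac12\|z-x\|^2$ over $D$, so it equals the metric projection $P_D$. Hence \eqref{alg} coincides with the displayed scheme.

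\emph{The constraint classes.} Definition~\ref{genBregJstar} becomes $\|p-Tx\|\le\|p-x\|$ for $p\in F_J(T)=\Fix(T)$, that is, quasi-nonexpansiveness. $J_*$-closedness becomes ordinary closedness of the graph along convergent sequences. The resolvents $R_r^{\Theta}$ and $Q_r^{A}$ are the usual Combettes--Hirstoaga resolvents, since $\nabla g(z)-\nabla g(x)=z-x$. So ``the corresponding assumptions'' translate word for word, and Theorem~\ref{mainthm} yields $x_n\to\Bproj_{\Sol}x=P_{\Sol}x$.

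No step is a real obstacle. The only point requiring care is the identification $\Bproj=P$, which holds because $\frac12\|z-x\|^2$ and $\|z-x\|$ have the same minimizers.
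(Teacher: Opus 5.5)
Your proposal is correct and follows the same route as the paper, which treats this corollary as an immediate specialization of Theorem~\ref{mainthm} via $J=J_*=\nabla g=\nabla g^{*}=I$, $\Dg(x,y)=\frac12\|x-y\|^2$, and $\Bproj=P$. Your explicit verification of (G1)--(G5) and of the reduction of \eqref{alg} to the displayed scheme spells out checks that the paper leaves implicit.
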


\begin{corollary}[Alber functional case]
Let $E$ be a uniformly convex and uniformly smooth Banach space, and let
$g(x)=\frac12\|x\|^{2}$. Then $\nabla g=J$ and
\[
\Dg(x,y)=\frac12\left(\|x\|^{2}-2\langle x,Jy\rangle+\|y\|^{2}\right)=\frac12\phi(x,y).
\]
In this case, Definition~\ref{genBregJstar} becomes
\[
\phi(p,J_*Tx)\leq\phi(p,x),
\qquad \forall x\in C,\ p\in F_J(T),
\]
which is the generalized $J_*$-nonexpansive condition. Therefore, Theorem~\ref{mainthm} recovers an
Alber-functional hybrid method for fixed points, equilibrium problems, and variational inequality
problems.
\end{corollary}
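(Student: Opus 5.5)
The plan is to obtain the statement as a direct specialization of Theorem~\ref{mainthm} to the generator $g(x)=\frac12\|x\|^2$. It has three parts: compute $\nabla g$ and $\Dg$, translate Definition~\ref{genBregJstar} and the algorithm \eqref{alg} into Alber language, and verify that this $g$ satisfies $(G1)$--$(G5)$ on a uniformly convex and uniformly smooth space $E$.

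\textbf{Identification.} Since $E$ is smooth, $g$ is Gâteaux differentiable everywhere with $\nabla g=J$, and $\dom g=U=E$. Substituting into \eqref{Dgdef} gives
\[
\Dg(x,y)=\tfrac12\|x\|^2-\tfrac12\|y\|^2-\langle x-y,Jy\rangle,
\]
and using $\langle y,Jy\rangle=\|y\|^2$ this equals $\frac12\phi(x,y)$. Multiplying the inequality of Definition~\ref{genBregJstar} by $2$ then yields exactly $\phi(p,J_*Tx)\le\phi(p,x)$, as already recorded in Remark~\ref{JstarBregremark}. In the same way:
\begin{itemize}
\item the half-spaces $C_{n+1}$ are unchanged when $\Dg$ is replaced by $\phi$;
\item $\Bproj_D$ becomes Alber's generalized projection $\Pi_D$, since the minimizers of $\frac12\phi(\cdot,x)$ and $\phi(\cdot,x)$ coincide;
\item $\nabla g^*=J_*$, because $g^*=\frac12\|\cdot\|_*^2$;
\item $\Bmix(x_1,x_2,x_3)=J_*\bigl(\alpha_1Jx_1+\alpha_2Jx_2+\alpha_3Jx_3\bigr)$;
\item the resolvent conditions \eqref{Eresolvent}--\eqref{VIresolvent} become the usual Takahashi--Zembayashi-type resolvents with $J$ in place of $\nabla g$.
\end{itemize}

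\textbf{Checking $(G1)$--$(G5)$.} The verifications go as follows.
\begin{itemize}
\item $(G1)$: $g$ is Legendre because $E$ is reflexive, smooth and strictly convex, so $J$ is a bijection. It is strongly coercive and bounded on bounded sets.
\item $(G2)$: uniform Fréchet differentiability of $\frac12\|\cdot\|^2$ on bounded sets is equivalent to uniform smoothness of $E$; equivalently, $J$ is norm-to-norm uniformly continuous on bounded sets.
\item $(G3)$: total convexity on bounded sets is the Kamimura--Takahashi lemma. In a uniformly convex space, $\phi(x_n,y_n)\to0$ with bounded sequences forces $\|x_n-y_n\|\to0$. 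This follows from Xu's inequality $\|\lambda x+(1-\lambda)y\|^2\le\lambda\|x\|^2+(1-\lambda)\|y\|^2-\lambda(1-\lambda)\rho_r(\|x-y\|)$ on balls.
\item $(G4)$: $E^*$ is uniformly convex because $E$ is uniformly smooth. Applying Xu's inequality in $E^*$ gives uniform convexity of $g^*=\frac12\|\cdot\|_*^2$ on bounded subsets.
\item $(G5)$: the estimate $(\|z\|-\|x\|)^2\le\phi(z,x)$ (and its symmetric version) bounds both sublevel sets. $J$ maps bounded sets to bounded sets since $\|Jz\|=\|z\|$. $J_*$ is bounded on bounded sets, and it is uniformly continuous on bounded sets because $E^*$ is uniformly smooth, which holds since $E$ is uniformly convex.
\end{itemize}

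\textbf{Conclusion and expected obstacle.} With all hypotheses verified, Theorem~\ref{mainthm} yields $x_n\to\Pi_{\Sol}x$, which is the claimed Alber-functional hybrid method. The only step requiring care is $(G3)$--$(G4)$. The uniform convexity of the squared norm on bounded sets, in $E$ and in $E^*$, is not formal, and I would cite Xu's characterization of uniformly convex spaces together with the Kamimura--Takahashi lemma rather than reprove them. Everything else is bookkeeping of constants: the factor $\frac12$ between $\Dg$ and $\phi$ does not affect any inequality, half-space or argmin.
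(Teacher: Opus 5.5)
Your proposal is correct and follows the same route as the paper. The paper gives no separate argument beyond the identification $\nabla g=J$, $\Dg=\frac12\phi$ already recorded in Remark~\ref{JstarBregremark} and a direct appeal to Theorem~\ref{mainthm}. Your explicit check of $(G1)$--$(G5)$ for $g=\frac12\|\cdot\|^2$ is sound and supplies a step the paper leaves implicit; it uses uniform smoothness of $E$ and $E^*$, Xu's inequality, the Kamimura--Takahashi lemma, and $(\|z\|-\|x\|)^2\le\phi(z,x)$.
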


\begin{corollary}
Assume that $T_j=T$ for every $j\geq1$, where $T$ is $J_*$-closed and generalized Bregman
$J_*$-nonexpansive. Then the sequence generated by \eqref{alg} converges strongly to the Bregman
projection of $x$ onto
\[
F_J(T)\cap\left(\bigcap_{l=1}^{\infty}\EP(\Theta_l)\right)
\cap\left(\bigcap_{k=1}^{\infty}\VI(C,A_k)\right).
\]
\end{corollary}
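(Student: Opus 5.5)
The corollary is the special case of Theorem~\ref{mainthm} in which the fixed-point family is the constant sequence $T_j=T$. The plan is therefore to check that every hypothesis of Theorem~\ref{mainthm} holds for this family and to identify the common solution set. All other hypotheses are inherited unchanged, as the corollary is understood in the setting of Theorem~\ref{mainthm}: the space $E$, the generator $g$ satisfying $(G1)$--$(G5)$, the families $\{\Theta_l\}$ and $\{A_k\}$, Assumption~\ref{resolventprops}, the parameters $\alpha_i$ and $\{r_n\}$, and the controls.

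First, each $T_j=T$ is $J_*$-closed and generalized Bregman $J_*$-nonexpansive by assumption, so $\{T_j\}_{j\geq1}$ is an admissible countable family in the sense of Section~\ref{sec:problem}. Next, since $F_J(T_j)=F_J(T)$ for every $j$, the intersection $\bigcap_{j=1}^\infty F_J(T_j)$ equals $F_J(T)$. Hence the set $\Sol$ in \eqref{commonset} coincides with
\[
F_J(T)\cap\Bigl(\bigcap_{l=1}^{\infty}\EP(\Theta_l)\Bigr)\cap\Bigl(\bigcap_{k=1}^{\infty}\VI(C,A_k)\Bigr).
\]
This set must be assumed nonempty, since that hypothesis is carried over from Theorem~\ref{mainthm}.

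For the recurrent control $\tau_T$, any choice works. The simplest is $\tau_T\equiv1$: then $\{n:\tau_T(n)=1\}=\mathbb N$ is infinite, and no other index $j$ needs to be activated in any essential way, because $\widehat T_n=T$ no matter what $\tau_T(n)$ is. If Theorem~\ref{mainthm} is read strictly as requiring every index $j$ to be hit infinitely often, we instead take any surjective recurrent enumeration. This does not change the algorithm, since $\widehat T_n=T$ for all $n$. So the iteration \eqref{alg} is literally the same sequence in either reading.

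Applying Theorem~\ref{mainthm} then gives $x_n\to\Bproj_{\Sol}x$ strongly, and $\Sol$ is the displayed set. The only subtle point, which is minor, is this independence of $\tau_T$: the hypotheses on $\tau_T$ are satisfiable while the iterates do not depend on the choice. The corollary is immediate once this is noted.
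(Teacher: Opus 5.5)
Your proposal is correct and matches the paper, which gives no separate proof and treats the corollary as the direct specialization of Theorem~\ref{mainthm} to the constant family. There $\bigcap_{j}F_J(T_j)=F_J(T)$, so $\Sol$ is exactly the displayed set (assumed nonempty), and $\widehat T_n=T$ for every $n$ whatever $\tau_T$ is; you handle the harmless recurrence requirement on $\tau_T$ correctly (in fact $\tau_T$ is already fixed as in Section~\ref{sec:problem}).
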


\begin{remark}
Theorem~\ref{mainthm} is a Legendre--Bregman counterpart of Alber-functional hybrid schemes such as \cite{UbaCarpathian2023,UbaViscosity2026,UbaCountableMonotone2026}. The proof requires dual-coordinate Bregman mixing, bounded-set properties of $g$ and $g^*$, componentwise recurrent activation, and a localized argument that covers entropy generators. The inertial Bregman framework in \cite{UbaInertial2026} concerns a different fixed-point-only algorithm and does not contain the equilibrium and variational-inequality resolvent structure of the present theorem.
\end{remark}

\section{Preliminary tools and auxiliary results}\label{sec:prelim}

This section collects the geometric, mapping, and resolvent tools used in the outer-approximation analysis. Standard results are explicitly cited, while proofs are included in Section~\ref{sec:proofs} either for completeness or because the exact formulation used here is problem-specific.

\subsection{Bregman projection and dual-coordinate tools}

\begin{lemma}\cite{AlberButnariu1997,BauschkeBorwein1997,ButnariuIusem2000}\label{BprojExistence}
Assume that $E$ is reflexive and that $g$ is a strongly coercive Legendre function. If
$D\subset U$ is nonempty, closed, and convex, then $\Bproj_Dx$ exists and is unique for every
$x\in U$.
\end{lemma}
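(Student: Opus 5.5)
The plan is the standard direct-method argument. We minimize $h(z):=\Dg(z,x)$ over $D$ and then show the minimizer is unique.

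\emph{Existence.} Since $x\in U$ is fixed, we have
\[
h(z)=g(z)-g(x)-\langle\nabla g(x),z-x\rangle .
\]
Thus $h$ is $g$ minus a continuous affine function. It is therefore proper, convex and lower semicontinuous on $E$, and finite on $D\subset U$. Strong coercivity gives $g(z)/\|z\|\to+\infty$. Since $|\langle\nabla g(x),z-x\rangle|\le\|\nabla g(x)\|(\|z\|+\|x\|)$, it follows that $h(z)\to+\infty$ as $\|z\|\to\infty$.

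Fix $z_0\in D$. The sublevel set $\{z\in D:h(z)\le h(z_0)\}$ is then nonempty and bounded. It is also closed and convex, because $D$ is closed and convex and $h$ is convex and lower semicontinuous. In a reflexive space such a set is weakly compact. A convex lower semicontinuous function is weakly lower semicontinuous, so $h$ attains its infimum over this set. That minimizer also minimizes $h$ over all of $D$.

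\emph{Uniqueness.} This is the only delicate point, because $g$ is only essentially strictly convex, i.e.\ strictly convex on convex subsets of $\dom\partial g$. Suppose $z_1\neq z_2$ in $D$ are both minimizers. Since $D\subset U=\intdom g$ and $g$ is differentiable on $U$, both points lie in $\dom\partial g$. Moreover, the segment $[z_1,z_2]$ lies in $D\subset U$ by convexity of $D$, and $U\subset\dom\partial g$. Essential strict convexity therefore makes $g$ strictly convex on this segment, and hence so is $h$, since it differs from $g$ by an affine term. Consequently
\[
h\bigl(\tfrac12(z_1+z_2)\bigr)<\tfrac12h(z_1)+\tfrac12h(z_2)=\min_D h ,
\]
while $\tfrac12(z_1+z_2)\in D$. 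This is a contradiction, so the minimizer is unique and $\Bproj_Dx$ is well defined.

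The main obstacle is this uniqueness step: strict convexity of $g$ is available only on convex subsets of $\dom\partial g$, so one must check that $D$ lies inside $\dom\partial g$. That is exactly why the statement assumes $D\subset U$, where $g$ is Gâteaux differentiable. Existence uses only reflexivity together with coercivity.
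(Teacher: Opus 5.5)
Your proposal is correct and follows essentially the same direct-method argument as the paper: coercivity of $\Dg(\cdot,x)$ from strong coercivity of $g$, weak compactness via reflexivity, weak lower semicontinuity of a convex lower semicontinuous function, and uniqueness from essential strict convexity of $g$ on $D\subset U$. The differences are cosmetic. You work with a bounded closed convex sublevel set where the paper uses a minimizing sequence. You also spell out why $D\subset U\subset\dom\partial g$ makes strict convexity applicable, which the paper only asserts.
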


\begin{lemma}\cite{AlberButnariu1997,BauschkeBorwein1997,ButnariuIusem2000}\label{Bprojlemma}
Let $D\subset U$ be nonempty, closed and convex, and let $z=\Bproj_Dx$. Then
\begin{equation}\label{Bprojchar}
\langle y-z,\nabla g(x)-\nabla g(z)\rangle\leq0,
\qquad \forall y\in D.
\end{equation}
Equivalently,
\begin{equation}\label{Bprojineq}
\Dg(y,z)+\Dg(z,x)\leq \Dg(y,x),
\qquad \forall y\in D.
\end{equation}
\end{lemma}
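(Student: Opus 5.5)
The statement just given is Lemma~\ref{Bprojlemma}, the variational characterization of the Bregman projection. I would prove it in two steps: first obtain \eqref{Bprojchar} from the first-order optimality condition for the convex minimization defining $z=\Bproj_Dx$, then get \eqref{Bprojineq} from the three-point identity for $\Dg$.

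For \eqref{Bprojchar}, fix $y\in D$ and $t\in(0,1]$, and set $z_t:=z+t(y-z)\in D$ (convexity of $D$). By minimality,
\[
\Dg(z_t,x)-\Dg(z,x)\geq0 .
\]
Expanding with \eqref{Dgdef} gives
\[
\Dg(z_t,x)-\Dg(z,x)=g(z_t)-g(z)-t\langle \nabla g(x),y-z\rangle .
\]
Divide by $t$ and let $t\downarrow0$. Since $z\in D\subset U$ and $g$ is G\^ateaux differentiable on $U$, the difference quotient $(g(z_t)-g(z))/t$ tends to $\langle \nabla g(z),y-z\rangle$. Hence
\[
\langle y-z,\nabla g(z)-\nabla g(x)\rangle\geq0,
\]
which is \eqref{Bprojchar}. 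For the converse, let $z\in D$ satisfy \eqref{Bprojchar}. For every $y\in D$, the identity below gives $\Dg(y,x)\geq\Dg(y,z)+\Dg(z,x)\geq\Dg(z,x)$, so $z$ is a minimizer. It is then the unique minimizer by Lemma~\ref{BprojExistence}.

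For \eqref{Bprojineq}, I would use the three-point identity
\[
\Dg(y,x)=\Dg(y,z)+\Dg(z,x)+\langle y-z,\nabla g(z)-\nabla g(x)\rangle,
\qquad y\in\dom g,\ x,z\in U .
\]
It follows by writing out the three Bregman distances from \eqref{Dgdef}: the terms $g(y)$, $g(z)$, $g(x)$ cancel, and the linear terms regroup into the stated pairing. The equivalence is then immediate. The pairing term is nonnegative for all $y\in D$ precisely when \eqref{Bprojchar} holds, and in that case it yields $\Dg(y,z)+\Dg(z,x)\leq\Dg(y,x)$. Conversely, if \eqref{Bprojineq} holds for all $y\in D$, the identity forces the pairing to be $\geq0$, which is \eqref{Bprojchar}.

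The only delicate point is the limit $t\downarrow0$: it requires $z\in U$ so that the derivative exists. This holds here because $D\subset U$ by hypothesis. Everything else is algebraic.
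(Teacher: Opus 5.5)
Your proof is correct, and it is the standard argument from the references the paper cites; the paper does not prove Lemma~\ref{Bprojlemma} itself. Your two steps are the one-sided first-order condition along $z+t(y-z)\in D$ at the point $z\in U$, and then the three-point identity of Lemma~\ref{threepoint} to turn \eqref{Bprojchar} into \eqref{Bprojineq} for each $y$. These are the same tools the paper uses elsewhere: the closedness part of the proof of Proposition~\ref{projectiongenerated} uses exactly your converse step (\eqref{Bprojchar} plus the three-point identity, then uniqueness), which the statement itself does not require.
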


\begin{lemma}\cite{Bregman1967,ButnariuIusem2000}\label{threepoint}
For all $x,y,z\in U$, the following identity holds:
\[
\Dg(x,z)=\Dg(x,y)+\Dg(y,z)+\langle x-y,\nabla g(y)-\nabla g(z)\rangle.
\]
\end{lemma}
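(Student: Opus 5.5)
The identity in Lemma~\ref{threepoint} is a purely algebraic consequence of the definition \eqref{Dgdef}. I would verify it by expanding each Bregman distance and comparing both sides term by term. No geometric hypothesis on $E$ or on $g$ beyond Gâteaux differentiability on $U$ is needed. Since $x,y,z\in U\subset\dom g$, all three quantities $\Dg(x,z)$, $\Dg(x,y)$, $\Dg(y,z)$ are finite real numbers, so the manipulations below are legitimate.

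First I would write out the three distances:
\begin{align*}
\Dg(x,y)&=g(x)-g(y)-\langle \nabla g(y),x-y\rangle,\\
\Dg(y,z)&=g(y)-g(z)-\langle \nabla g(z),y-z\rangle,\\
\Dg(x,z)&=g(x)-g(z)-\langle \nabla g(z),x-z\rangle.
\end{align*}
Adding the first two, the $g(y)$ terms cancel. This gives $g(x)-g(z)-\langle\nabla g(y),x-y\rangle-\langle\nabla g(z),y-z\rangle$. Subtracting this sum from $\Dg(x,z)$ leaves only pairing terms:
\[
-\langle\nabla g(z),x-z\rangle+\langle\nabla g(y),x-y\rangle+\langle\nabla g(z),y-z\rangle .
\]
Using bilinearity of the duality pairing, $-\langle\nabla g(z),x-z\rangle+\langle\nabla g(z),y-z\rangle=-\langle \nabla g(z),x-y\rangle$. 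Hence the remainder equals $\langle x-y,\nabla g(y)-\nabla g(z)\rangle$, which is exactly the claimed correction term.

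There is no real obstacle here. The only point requiring care is the sign convention and argument order in $\langle\cdot,\cdot\rangle$, since the paper writes the pairing as $\langle$primal, dual$\rangle$. I would check that the final term is written with the primal difference $x-y$ first and the dual difference $\nabla g(y)-\nabla g(z)$ second, consistent with \eqref{Bprojchar}. As a sanity check, in the Hilbert case $g=\frac12\|\cdot\|^2$ the identity reduces to the familiar $\frac12\|x-z\|^2=\frac12\|x-y\|^2+\frac12\|y-z\|^2+\langle x-y,y-z\rangle$.
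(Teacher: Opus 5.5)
Your expansion is correct: subtracting $\Dg(x,y)+\Dg(y,z)$ from $\Dg(x,z)$ and using bilinearity of the pairing leaves exactly $\langle x-y,\nabla g(y)-\nabla g(z)\rangle$, and the argument uses only finiteness of $g$ and G\^ateaux differentiability on $U$. The paper does not prove this lemma; it cites it as standard from \cite{Bregman1967,ButnariuIusem2000}, and your direct expansion of the definition \eqref{Dgdef} is the standard verification.
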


\begin{lemma}\label{convexdual}
Let $x_1,x_2,x_3,p\in U$ and let $\alpha_1,\alpha_2,\alpha_3\in(0,1)$ with
$\alpha_1+\alpha_2+\alpha_3=1$. Set
\[
y=\Bmix(x_1,x_2,x_3).
\]
Then the Bregman mixture $y$ is well defined, belongs to $U$, and
\[
\Dg(p,y)\leq \alpha_1\Dg(p,x_1)+\alpha_2\Dg(p,x_2)+\alpha_3\Dg(p,x_3).
\]
Moreover, let $\mathcal{V}^{*}\subset U^{*}$ be bounded. There is a nondecreasing function
$\rho_{\mathcal{V}^{*}}:[0,\infty)\to[0,\infty)$ such that
\[
\rho_{\mathcal{V}^{*}}(0)=0,
\qquad
\rho_{\mathcal{V}^{*}}(t)>0\quad\hbox{for every }t>0,
\]
and, whenever $\nabla g(x_i)\in \mathcal{V}^{*}$ for $i=1,2,3$,
\begin{align}\label{strictdual}
\Dg(p,y)&\leq \alpha_1\Dg(p,x_1)+\alpha_2\Dg(p,x_2)+\alpha_3\Dg(p,x_3)\nonumber\\
&\quad-\frac{\alpha_1\alpha_3}{\alpha_1+\alpha_3}\rho_{\mathcal{V}^{*}}(\|\nabla g(x_1)-\nabla g(x_3)\|).
\end{align}
The basic Jensen-type estimate follows from the dual representation of a Bregman distance and convexity of $g^*$; compare \cite{BauschkeBorwein1997,ButnariuIusemZalinescu2003}. The explicit three-point modulus coefficient in \eqref{strictdual} is proved below in the form required by the algorithm.
\end{lemma}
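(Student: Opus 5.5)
Set $\xi_i:=\nabla g(x_i)\in U^*$ and $\bar\xi:=\alpha_1\xi_1+\alpha_2\xi_2+\alpha_3\xi_3$. Since $U^*$ is convex (Legendre duality), $\bar\xi\in U^*$, so $y=\nabla g^*(\bar\xi)\in U$ is well defined with $\nabla g(y)=\bar\xi$. The whole argument runs through the dual representation
\[
\Dg(p,y)=g(p)-\langle p,\nabla g(y)\rangle+g^*(\nabla g(y)),
\]
which follows from the Fenchel--Young equality $g(y)+g^*(\nabla g(y))=\langle y,\nabla g(y)\rangle$, valid for $y\in U$. Writing $V_p(\xi):=g(p)-\langle p,\xi\rangle+g^*(\xi)$, we have $\Dg(p,x_i)=V_p(\xi_i)$ and $\Dg(p,y)=V_p(\bar\xi)$. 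Since $\xi\mapsto V_p(\xi)$ is $g^*$ plus an affine function of $\xi$, any convexity estimate for $g^*$ at $\bar\xi$ transfers verbatim to $V_p$. Convexity of $g^*$ then gives the Jensen estimate $\Dg(p,y)\le\sum\alpha_i\Dg(p,x_i)$ at once.

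For the strict estimate \eqref{strictdual}, I would use (G4), uniform convexity of $g^*$ on bounded subsets of $U^*$. Let $W^*$ be the convex hull of $\mathcal V^*$, which is again bounded; if needed, first intersect with $U^*$, which is convex. I would take the modulus of uniform convexity of $g^*$ on $W^*$ in the two-point form
\[
g^*(\lambda u+(1-\lambda)v)\le\lambda g^*(u)+(1-\lambda)g^*(v)-\lambda(1-\lambda)\rho(\|u-v\|),\qquad \lambda\in(0,1),
\]
with $\rho$ nondecreasing, $\rho(0)=0$ and $\rho(t)>0$ for $t>0$. If the definition of uniform convexity in force only gives this at $\lambda=\tfrac12$, I would pass to the gauge form, which is standard for uniformly convex functions on bounded convex sets (Z\u{a}linescu), and absorb the resulting constant into $\rho_{\mathcal V^*}$.

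The key step is to group the mixture so that the pair $(\xi_1,\xi_3)$ is handled separately. Put $s=\alpha_1+\alpha_3$ and $\eta=(\alpha_1\xi_1+\alpha_3\xi_3)/s\in W^*$, so that $\bar\xi=s\eta+\alpha_2\xi_2$. Plain convexity gives $g^*(\bar\xi)\le s\,g^*(\eta)+\alpha_2g^*(\xi_2)$. The uniform-convexity inequality with $\lambda=\alpha_1/s$ gives
\[
g^*(\eta)\le\frac{\alpha_1}{s}g^*(\xi_1)+\frac{\alpha_3}{s}g^*(\xi_3)-\frac{\alpha_1\alpha_3}{s^2}\rho(\|\xi_1-\xi_3\|).
\]
Multiplying by $s$ yields exactly the coefficient $\alpha_1\alpha_3/(\alpha_1+\alpha_3)$. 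Adding the affine terms $g(p)-\langle p,\cdot\rangle$, which are preserved under convex combinations, turns this into \eqref{strictdual} with $\rho_{\mathcal V^*}:=\rho$.

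I expect the main obstacle to be fixing a precise form of the modulus. Specifically, I need to justify the $\lambda(1-\lambda)\rho$ inequality for general $\lambda$, rather than only at midpoints, on a bounded convex subset of $U^*$ containing all the relevant points. Part of this is checking that $\eta$ and $\bar\xi$ stay inside the set on which (G4) applies. Taking $W^*$ to be the convex hull of $\mathcal V^*$, bounded and contained in $U^*$, settles the membership question. The general-$\lambda$ form I would cite from the standard characterization of uniformly convex functions on bounded sets.
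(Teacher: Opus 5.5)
Your proposal is correct and follows essentially the same route as the paper. Both proofs use the Fenchel--Young representation $\Dg(p,\nabla g^{*}(s))=g(p)+g^{*}(s)-\langle p,s\rangle$ and convexity of $g^{*}$ for the Jensen bound. For the strict estimate, both apply the two-point $\lambda(1-\lambda)\rho$ inequality to the pair $(\xi_1,\xi_3)$ with $\lambda=\alpha_1/(\alpha_1+\alpha_3)$ on a bounded convex superset of $\mathcal{V}^{*}$ in $U^{*}$, and then use ordinary convexity to absorb $\xi_2$. You are more explicit than the paper about two points: the general-$\lambda$ form of the modulus, and keeping $\eta$ inside the set where $(G4)$ applies.
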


\begin{lemma}[Dual residual conversion]\label{dualresidual}
Assume the uniform-continuity clause in $(G5)$. Let $\{s_n\}$ and $\{t_n\}$ be bounded sequences in
$U^{*}$ such that $\|s_n-t_n\|\to0$. Then
\[
\|\nabla g^{*}(s_n)-\nabla g^{*}(t_n)\|\to0.
\]
\end{lemma}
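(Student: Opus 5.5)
The statement to prove is Lemma~\ref{dualresidual}. The plan is to argue directly from the uniform-continuity clause of $(G5)$, applied on a single bounded subset of $U^{*}$ that contains both sequences. First I would collect all terms into one set, setting
\[
\mathcal B^{*}:=\{s_n:n\geq1\}\cup\{t_n:n\geq1\}\subset U^{*}.
\]
Since $\{s_n\}$ and $\{t_n\}$ are bounded by hypothesis, $\mathcal B^{*}$ is a bounded subset of $U^{*}$. By $(G5)$, $\nabla g^{*}$ is uniformly continuous on $\mathcal B^{*}$; the uniform-continuity clause is stated for arbitrary bounded subsets of $U^{*}$, so no convexity of $\mathcal B^{*}$ is needed. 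Concretely, for every $\varepsilon>0$ there is $\delta>0$ such that, for all $s,t\in\mathcal B^{*}$,
\[
\|s-t\|<\delta\ \Longrightarrow\ \|\nabla g^{*}(s)-\nabla g^{*}(t)\|<\varepsilon .
\]

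Next I would conclude with an $\varepsilon$--$N$ argument. Fix $\varepsilon>0$ and take the corresponding $\delta$. Since $\|s_n-t_n\|\to0$, there is $N$ such that $\|s_n-t_n\|<\delta$ for all $n\geq N$. Both $s_n$ and $t_n$ lie in $\mathcal B^{*}$, so
\[
\|\nabla g^{*}(s_n)-\nabla g^{*}(t_n)\|<\varepsilon\qquad\text{for all } n\geq N,
\]
which is the claim.

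The only point that needs care is that uniform continuity must hold on a single fixed bounded set containing every $s_n$ and $t_n$, rather than only locally near each point. Pointwise continuity would not suffice, because the sequences need not converge. Taking $\mathcal B^{*}$ to be the union of the ranges of the two sequences settles this. I also note that well-definedness of $\nabla g^{*}(s_n)$ and $\nabla g^{*}(t_n)$ uses $s_n,t_n\in U^{*}=\intdom g^{*}$, where $\nabla g^{*}$ is single-valued by Legendre duality.
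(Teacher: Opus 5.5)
Your proposal is correct and follows the same approach as the paper, which also takes the union of the two sequences as a bounded subset of $U^{*}$ and applies the uniform continuity of $\nabla g^{*}$ from $(G5)$ on that set. Your $\varepsilon$--$N$ argument simply writes out in full what the paper states in one line.
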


\subsection{Projection-generated mappings and componentwise resolvents}

\begin{proposition}[Projection-generated generalized Bregman $J_*$-nonexpansive mappings]
\label{projectiongenerated}
Let $D\subset C$ be nonempty, closed, and convex. Assume that the Bregman projection
$\Bproj_Dx$ exists for every $x\in C$, and that $\nabla g$ is norm-continuous on $U$.
Define the mapping
\[
T_Dx:=J(\Bproj_Dx),\qquad x\in C.
\]
Then $J_*T_D=\Bproj_D$, and $T_D$ is a $J_*$-closed generalized Bregman
$J_*$-nonexpansive mapping satisfying
\[
F_J(T_D)=D.
\]
\end{proposition}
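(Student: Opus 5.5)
The argument verifies, in order, the four claims of Proposition~\ref{projectiongenerated}: the identity $J_*T_D=\Bproj_D$, the description $F_J(T_D)=D$, the Bregman nonexpansivity inequality, and $J_*$-closedness.

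First, since $J$ is a bijection from $E$ onto $E^*$ with inverse $J_*$, we get $J_*T_Dx=J_*J(\Bproj_Dx)=\Bproj_Dx$ for every $x\in C$. In particular $J_*T_D(C)\subset D\subset C$, so $T_D$ is an admissible mapping in the sense of Definition~\ref{Jfixeddef}. For the fixed-point set, injectivity of $J$ gives
\[
T_Dp=Jp\iff J(\Bproj_Dp)=Jp\iff \Bproj_Dp=p.
\]
If $p\in D$, then $\Dg(p,p)=0$ is the minimal value of $z\mapsto\Dg(z,p)$ over $D$. Uniqueness of the projection (assumed, or available from Lemma~\ref{BprojExistence}) then yields $\Bproj_Dp=p$. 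Conversely, $\Bproj_Dp=p$ forces $p\in D$. Hence $F_J(T_D)=D$, which is nonempty.

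Next, for the nonexpansivity inequality, take $x\in C$ and $p\in F_J(T_D)=D$. Apply \eqref{Bprojineq} of Lemma~\ref{Bprojlemma} with $y=p$ and $z=\Bproj_Dx$:
\[
\Dg(p,\Bproj_Dx)+\Dg(\Bproj_Dx,x)\le \Dg(p,x).
\]
Dropping the nonnegative term $\Dg(\Bproj_Dx,x)$ and using $J_*T_Dx=\Bproj_Dx$ gives exactly the inequality in Definition~\ref{genBregJstar}.

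The main obstacle is $J_*$-closedness, i.e.\ continuity of $\Bproj_D$ on $C$. Suppose $x_n\to x$ in $C$ and $w_n:=\Bproj_Dx_n\to y$. Since $D$ is closed, $y\in D$. For every $q\in D$, the characterization \eqref{Bprojchar} gives
\[
\langle q-w_n,\nabla g(x_n)-\nabla g(w_n)\rangle\le 0.
\]
Norm-continuity of $\nabla g$ on $U$ gives $\nabla g(x_n)\to\nabla g(x)$ and $\nabla g(w_n)\to\nabla g(y)$. Moreover $\{w_n\}$ converges, hence is bounded, so the pairing converges and we may pass to the limit:
\[
\langle q-y,\nabla g(x)-\nabla g(y)\rangle\le0,\qquad q\in D.
\]
This is the variational characterization of $\Bproj_Dx$. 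To conclude $y=\Bproj_Dx$, I would use the converse direction of Lemma~\ref{Bprojlemma}: the inequality implies \eqref{Bprojineq}, whose rearrangement shows that $y$ minimizes $\Dg(\cdot,x)$ over $D$. This converse can be checked directly through the three-point identity of Lemma~\ref{threepoint}, taking $x\mapsto q$, $y\mapsto y$, $z\mapsto x$. Uniqueness of the projection then gives $y=\Bproj_Dx=J_*T_Dx$, which completes the argument.
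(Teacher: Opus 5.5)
Your proof is correct and follows the paper's argument step for step: $J_*T_D=\Bproj_D$ from $J_*=J^{-1}$, then $F_J(T_D)=D$ via $\Dg(p,p)=0$ and uniqueness of the projection, then nonexpansivity from \eqref{Bprojineq}, and finally $J_*$-closedness by passing to the limit in \eqref{Bprojchar} with the continuity of $\nabla g$ before invoking the three-point identity and uniqueness. Your extra remarks, that $J_*T_D(C)\subset D\subset C$ makes $T_D$ admissible and that $F_J(T_D)=D\neq\emptyset$, only make explicit details the paper leaves implicit.
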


\begin{proposition}\cite{TakahashiZembayashi2008,CombettesHirstoaga2005,ChenBiSu2015,ReichSabach2010}\label{resolventconsequences}
Let $g$ be Legendre, let $C\subset U$ be nonempty, closed, and convex, and let $r>0$.
\begin{enumerate}
\item If $\Theta:C\times C\to\mathbb R$ satisfies \textup{(E1)}--\textup{(E4)} and
$\mathcal Z_r^{\Theta}(x)$ is nonempty for every $x\in C$, then each such set is a singleton.
Its unique element is denoted by $R_r^{\Theta}x$. Moreover,
\[
\Fix(R_r^{\Theta})=\EP(\Theta),
\]
and
\begin{equation}\label{resineq1}
\Dg(p,R_r^{\Theta}x)+\Dg(R_r^{\Theta}x,x)\leq \Dg(p,x),
\qquad p\in\EP(\Theta),\ x\in C.
\end{equation}
\item If $A:C\to E^{*}$ is monotone and $\mathcal W_r^{A}(x)$ is nonempty for every $x\in C$,
then each such set is a singleton. Its unique element is denoted by $Q_r^{A}x$. Moreover,
\[
\Fix(Q_r^{A})=\VI(C,A),
\]
and
\begin{equation}\label{resineq2}
\Dg(q,Q_r^{A}x)+\Dg(Q_r^{A}x,x)\leq \Dg(q,x),
\qquad q\in\VI(C,A),\ x\in C.
\end{equation}
\end{enumerate}
\end{proposition}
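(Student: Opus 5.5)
We prove the two parts of Proposition~\ref{resolventconsequences} by the same scheme: uniqueness, then the fixed-point identification, then the three-point inequality. The bifunction case is done in detail, and the variational-inequality case is reduced to it.

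\emph{Uniqueness.} Let $z_1,z_2\in\mathcal Z_r^{\Theta}(x)$. Test the defining inequality for $z_1$ with $y=z_2$ and the one for $z_2$ with $y=z_1$, then add. Monotonicity (E2) gives $\Theta(z_1,z_2)+\Theta(z_2,z_1)\le0$, and the $\nabla g(x)$ terms cancel, so
\[
\langle z_2-z_1,\nabla g(z_1)-\nabla g(z_2)\rangle\ge0,
\]
that is, $\langle z_1-z_2,\nabla g(z_1)-\nabla g(z_2)\rangle\le0$. By Lemma~\ref{threepoint} the left-hand side equals $\Dg(z_1,z_2)+\Dg(z_2,z_1)$. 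Hence both Bregman distances vanish. Strict convexity of a Legendre function on $U$ then forces $z_1=z_2$.

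\emph{Fixed points.} If $p\in\EP(\Theta)$, then $z=p$ satisfies the defining inequality with $x=p$, since the gradient term is zero. By uniqueness, $R_r^\Theta p=p$. Conversely, if $R_r^\Theta p=p$, the gradient term vanishes and we read off $\Theta(p,y)\ge0$ for all $y\in C$. The argument for $Q_r^A$ is identical with $\Theta(z,y)$ replaced by $\langle y-z,Az\rangle$. There uniqueness uses $\langle z_1-z_2,Az_1-Az_2\rangle\ge0$, which is the monotonicity of $A$.

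\emph{Inequality \eqref{resineq1}.} Let $z=R_r^\Theta x$ and $p\in\EP(\Theta)$. Taking $y=p$ in the defining inequality gives
\[
\tfrac1r\langle p-z,\nabla g(z)-\nabla g(x)\rangle\ge-\Theta(z,p).
\]
By (E2), $-\Theta(z,p)\ge\Theta(p,z)$, and $\Theta(p,z)\ge0$ because $p\in\EP(\Theta)$. Therefore $\langle p-z,\nabla g(z)-\nabla g(x)\rangle\ge0$. Lemma~\ref{threepoint} with the points $(p,z,x)$ gives
\[
\Dg(p,x)=\Dg(p,z)+\Dg(z,x)+\langle p-z,\nabla g(z)-\nabla g(x)\rangle,
\]
and the last term is nonnegative, so \eqref{resineq1} follows.

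\emph{Inequality \eqref{resineq2}.} Let $z=Q_r^Ax$ and $q\in\VI(C,A)$. Taking $y=q$ gives
\[
\tfrac1r\langle q-z,\nabla g(z)-\nabla g(x)\rangle\ge\langle z-q,Az\rangle.
\]
By monotonicity of $A$, $\langle z-q,Az\rangle\ge\langle z-q,Aq\rangle$, and $\langle z-q,Aq\rangle\ge0$ because $q$ solves the variational inequality. The same three-point identity then yields \eqref{resineq2}.

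All steps are short. The only point needing care is the uniqueness step: one has to use strict convexity of $g$ on $U$, which holds because $g$ is Legendre, rather than total convexity. The argument does not use (E3)--(E4); those conditions are needed only for existence, which Assumption~\ref{resolventprops} supplies.
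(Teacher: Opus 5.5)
Your proposal is correct and matches the paper's proof essentially step for step. Uniqueness comes from testing each candidate at the other, using (E2) or monotonicity of $A$, the identity $\Dg(z_1,z_2)+\Dg(z_2,z_1)=\langle z_1-z_2,\nabla g(z_1)-\nabla g(z_2)\rangle$, and strict convexity of the Legendre generator; the fixed-point identities are read off from the defining inequality with $x=z$; and the Fej\'er-type estimates follow by testing at $y=p$ (resp.\ $y=q$) and applying the three-point identity. Your remark that (E3)--(E4) are not used here also agrees with the paper's argument.
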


\begin{proposition}\cite{ChenBiSu2015,ReichSabach2010}\label{standardresolvent}
Assume that $g:E\to\mathbb R$ is a continuous, strongly coercive Legendre function that is bounded
and uniformly convex on bounded subsets of $E$. Let $C\subset E$ be nonempty, closed, and convex.
If $\Theta:C\times C\to\mathbb R$ satisfies \textup{(E1)}--\textup{(E4)}, then
$\mathcal Z_r^{\Theta}(x)$ is nonempty for every $x\in C$ and $r>0$. If, in addition,
$A:C\to E^*$ is norm continuous and monotone, then $\mathcal W_r^A(x)$ is nonempty for every
$x\in C$ and $r>0$. Consequently, Proposition~\ref{resolventconsequences} supplies the resolvent
mappings, their fixed-point identities, and the estimates \eqref{resineq1}--\eqref{resineq2}.
\end{proposition}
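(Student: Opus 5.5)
The plan is to reduce both existence claims to one existence statement for an auxiliary equilibrium problem, and then to treat the variational-inequality case as a special equilibrium problem.

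\emph{Step 1: the auxiliary bifunction.} Fix $x\in C$ and $r>0$, and define
\[
\Phi(z,y):=\Theta(z,y)+\frac1r\langle y-z,\nabla g(z)-\nabla g(x)\rangle,\qquad z,y\in C.
\]
Then $\mathcal Z_r^{\Theta}(x)=\EP(\Phi)$, so it suffices to show $\EP(\Phi)\neq\emptyset$. I would check that $\Phi$ inherits (E1)--(E4).
\begin{itemize}
\item (E1) is immediate.
\item (E2) holds because
\[
\Phi(z,y)+\Phi(y,z)=\Theta(z,y)+\Theta(y,z)-\frac1r\langle y-z,\nabla g(y)-\nabla g(z)\rangle\le0,
\]
using monotonicity of $\Theta$ and of $\nabla g$, the latter coming from convexity of $g$.
\item (E3) follows from (E3) for $\Theta$ together with norm-to-weak$^*$ continuity of $\nabla g$. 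Since $g$ is continuous and convex on $E$, $\nabla g$ is at least norm-to-weak$^*$ continuous along segments.
\item (E4) holds because the added term is affine and continuous in $y$.
\end{itemize}

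\emph{Step 2: existence on bounded truncations.} For $R>\|x\|$ set $C_R:=C\cap\overline{B}(0,R)$. This set is nonempty, convex, and weakly compact, since a uniformly convex space is reflexive. On $C_R$ I would apply the Blum--Oettli existence theorem \cite{BlumOettli1994}, in the standard KKM/Minty form: under (E1)--(E4) on a weakly compact convex set, there is $z_R\in C_R$ with $\Phi(z_R,y)\ge0$ for all $y\in C_R$. The route is to first obtain the Minty-type solution $\Phi(y,z_R)\le 0$ for all $y\in C_R$ from the KKM lemma, using weak lower semicontinuity from (E4) and convexity. One then passes back to the Stampacchia form using (E3) and convexity in the second argument.

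\emph{Step 3: coercivity and removal of the truncation.} This is where strong coercivity enters, and I expect it to be the main obstacle. Because $\Theta(x,\cdot)$ is convex and lower semicontinuous, it admits a continuous affine minorant: there are $w\in E^*$ and $a\in\mathbb R$ with
\[
\Theta(x,z)\ge a+\langle z,w\rangle .
\]
Combining this with (E2) and the gradient inequality $\langle z-x,\nabla g(z)\rangle\ge g(z)-g(x)$ gives
\[
\Phi(z,x)\le -a+\|w\|\|z\|-\frac1r\bigl(g(z)-g(x)-\langle z-x,\nabla g(x)\rangle\bigr).
\]
The right-hand side tends to $-\infty$ as $\|z\|\to\infty$, since $g(z)/\|z\|\to\infty$. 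Hence there is $R_0$ such that $\Phi(z,x)<0$ whenever $\|z\|\ge R_0$.

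Now choose $R>R_0$ and let $z_R$ be the solution from Step 2. Since $x\in C_R$, we have $\Phi(z_R,x)\ge0$, which forces $\|z_R\|<R_0<R$. For arbitrary $y\in C$, take $t>0$ small so that $w_t:=z_R+t(y-z_R)\in C_R$. Convexity of $\Phi(z_R,\cdot)$ and (E1) then give
\[
0\le\Phi(z_R,w_t)\le t\,\Phi(z_R,y),
\]
so $\Phi(z_R,y)\ge0$ for every $y\in C$. Thus $z_R\in\mathcal Z_r^{\Theta}(x)$.

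\emph{Step 4: the variational inequality case.} Set $\Theta_A(z,y):=\langle y-z,Az\rangle$. Then $\mathcal W_r^A(x)=\mathcal Z_r^{\Theta_A}(x)$, so it is enough to check that $\Theta_A$ satisfies (E1)--(E4).
\begin{itemize}
\item (E1) is trivial.
\item (E2) is exactly monotonicity of $A$.
\item (E3) follows from norm continuity of $A$ along segments.
\item (E4) holds because $\Theta_A(z,\cdot)$ is affine and continuous.
\end{itemize}
Steps 1--3 then apply verbatim. The final ``consequently'' clause is simply Proposition~\ref{resolventconsequences} applied to these nonempty sets.

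The delicate points, all in Step 3 and the (E3) check of Step 1, are the following. The coercivity estimate must go through the affine minorant of $\Theta(x,\cdot)$ and (E2), rather than through any growth assumption on $\Theta$. Verifying (E3) for $\Phi$ requires the regularity of $\nabla g$ along segments.
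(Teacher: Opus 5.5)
Your proof is correct, but it differs from the paper's mainly because the paper does not argue existence at all. It cites the full-domain Bregman-resolvent lemmas of Chen--Bi--Su (Lemmas 2.13--2.14) for $\mathcal Z_r^{\Theta}(x)$. It reduces the variational-inequality case to them through $G_A(z,y)=\langle Az,y-z\rangle$ (their Lemma 2.15), exactly as in your Step 4, and then invokes Proposition~\ref{resolventconsequences}.

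Your Steps 1--3 replace the citation by the standard argument behind such lemmas. You absorb the Bregman term into $\Phi$ and solve on the weakly compact truncations $C\cap\overline{B}(0,R)$ by Ky Fan's KKM lemma plus the Minty-to-Stampacchia passage. You then remove the truncation using the coercivity bound built from an affine minorant of $\Theta(x,\cdot)$, (E2), and $g(z)/\|z\|\to\infty$.

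The citation buys brevity. Your route buys a self-contained proof, and it shows that existence uses only reflexivity of $E$, continuity and G\^ateaux differentiability of $g$, and strong coercivity. So the hypotheses that $g$ be bounded and uniformly convex on bounded sets play no role in the nonemptiness claims; uniqueness is handled separately by strict convexity in Proposition~\ref{resolventconsequences}.

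The point you flagged as delicate, (E3) for $\Phi$, is sound. Restrict $g$ to the two-dimensional affine plane containing the segment and the test point $y$. This gives a differentiable convex function of two real variables, which is automatically $C^1$. Hence $\langle y-v_t,\nabla g(v_t)\rangle$ converges as $v_t$ tends to the endpoint of the segment.
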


\begin{proposition}[Potential-generated localized resolvents]\label{potentialresolvent}
Let $E=\mathbb R^d$, let $C\subset U$ be nonempty, compact, and convex, and suppose that $g$ is
$C^1$ and strictly convex on an open neighborhood of $C$. Let $\Phi$ be a convex function that is
$C^1$ on an open neighborhood of $C$, and define
\[
A_{\Phi}(z):=\nabla\Phi(z),
\qquad
\Theta_{\Phi}(z,y):=\langle\nabla\Phi(z),y-z\rangle.
\]
Then $A_{\Phi}$ is continuous and monotone, $\Theta_{\Phi}$ satisfies \textup{(E1)}--\textup{(E4)},
and, for every $x\in C$ and $r>0$,
\[
\mathcal Z_r^{\Theta_{\Phi}}(x)
=
\mathcal W_r^{A_{\Phi}}(x)
=
\left\{\argmin_{w\in C}\left(r\Phi(w)+\Dg(w,x)\right)\right\}.
\]
In particular, both candidate sets are nonempty, and all conclusions of
Proposition~\ref{resolventconsequences} hold.
The minimization characterization is the standard first-order optimality condition for a differentiable convex potential; see \cite{Rockafellar1970CA}.
\end{proposition}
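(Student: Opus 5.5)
The plan is to verify four facts about Proposition~\ref{potentialresolvent} in order: monotonicity and continuity of $A_\Phi$, the conditions (E1)--(E4) for $\Theta_\Phi$, existence and uniqueness of the minimizer of $w\mapsto r\Phi(w)+\Dg(w,x)$ over $C$, and the identification of both candidate sets with that minimizer via first-order optimality.

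\emph{Structure of $A_\Phi$ and $\Theta_\Phi$.} Continuity of $A_\Phi=\nabla\Phi$ is just the $C^1$ hypothesis. Monotonicity comes from adding the two gradient inequalities $\Phi(y)\ge\Phi(z)+\langle\nabla\Phi(z),y-z\rangle$ and $\Phi(z)\ge\Phi(y)+\langle\nabla\Phi(y),z-y\rangle$, which yields $\langle\nabla\Phi(y)-\nabla\Phi(z),y-z\rangle\ge0$. For $\Theta_\Phi$:
\begin{itemize}
\item (E1) is immediate.
\item (E2) is the same monotonicity inequality, since $\Theta_\Phi(x,y)+\Theta_\Phi(y,x)=-\langle\nabla\Phi(x)-\nabla\Phi(y),x-y\rangle\le0$.
\item (E3) holds with equality in the limit, because $t\mapsto\langle\nabla\Phi(tz+(1-t)x),y-tz-(1-t)x\rangle$ is continuous at $t=0$.
\item (E4) holds because $\Theta_\Phi(x,\cdot)$ is affine and continuous.
\end{itemize}

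\emph{Existence and uniqueness of the minimizer.} Fix $x\in C$ and $r>0$, and set $h(w):=r\Phi(w)+\Dg(w,x)$. This function is continuous on a neighborhood of the compact set $C$, so a minimizer exists. It is strictly convex because $w\mapsto\Dg(w,x)=g(w)-g(x)-\langle\nabla g(x),w-x\rangle$ is $g$ plus an affine function and $\Phi$ is convex. Hence the minimizer $\bar w$ is unique. Its gradient is $\nabla h(w)=r\nabla\Phi(w)+\nabla g(w)-\nabla g(x)$.

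\emph{Identification of the candidate sets.} Since $h$ is differentiable and convex on $C$, the standard criterion gives: $z\in C$ minimizes $h$ over $C$ if and only if $\langle y-z,\nabla h(z)\rangle\ge0$ for all $y\in C$ (see \cite{Rockafellar1970CA}).
\begin{itemize}
\item Necessity: take one-sided derivatives along the segments $z+t(y-z)$, which stay in $C$ by convexity.
\item Sufficiency: use the gradient inequality $h(y)\ge h(z)+\langle\nabla h(z),y-z\rangle$.
\end{itemize}
Dividing this variational inequality by $r>0$ gives
\[
\langle y-z,\nabla\Phi(z)\rangle+\tfrac1r\langle y-z,\nabla g(z)-\nabla g(x)\rangle\ge0\quad\text{for all }y\in C,
\]
which is exactly the defining condition of $\mathcal W_r^{A_\Phi}(x)$. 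It is also the defining condition of $\mathcal Z_r^{\Theta_\Phi}(x)$, since $\Theta_\Phi(z,y)=\langle y-z,\nabla\Phi(z)\rangle$. So both sets equal $\{\bar w\}$ and are in particular nonempty, and Proposition~\ref{resolventconsequences} applies to give the remaining conclusions.

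The only delicate point is that $g$ and $\Phi$ are assumed regular only on a neighborhood of $C$, not on all of $E$. Everything above takes place on $C$ or along segments inside $C$, so the local hypotheses suffice. Two specific things must be confirmed: that $C\subset U$, so $\nabla g(x)$ and $\Dg(\cdot,x)$ are defined; and that the gradient inequality for the convex $C^1$ function $h$ holds on the convex set $C$, which follows from one-variable convexity along segments.
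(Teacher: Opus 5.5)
Your proposal is correct and follows essentially the same route as the paper. Both arguments take the unique minimizer of $r\Phi+\Dg(\cdot,x)$ on the compact convex set $C$, identify both candidate sets with it through the first-order optimality condition divided by $r$, and check monotonicity and (E1)--(E4) directly from the convexity of $\Phi$. You are slightly more explicit than the paper in proving both directions of the optimality criterion, which is what gives equality of the sets rather than just inclusion of the minimizer.
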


\subsection{Solution-set structure}

\begin{proposition}[Structure of the individual solution sets]\label{setstructure}
Let $T:C\to E^*$ be a $J_*$-closed generalized Bregman $J_*$-nonexpansive mapping, let
$\Theta:C\times C\to\mathbb R$ satisfy $(E1)$--$(E4)$, and let $A:C\to E^*$ be norm continuous
and monotone. Then $F_J(T)$, $\EP(\Theta)$, and $\VI(C,A)$ are closed and convex subsets of $C$.
For the equilibrium and variational-inequality assertions; see \cite{BlumOettli1994,CombettesHirstoaga2005,TakahashiZembayashi2008,Minty1962}. The $J$-fixed-point assertion is verified directly below from Definition~\ref{genBregJstar} and $J_*$-closedness.
\end{proposition}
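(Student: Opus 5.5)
The statement splits naturally into three parts. The equilibrium and variational-inequality parts follow the standard arguments of the cited references. For $J$-fixed points I argue directly from Definition~\ref{genBregJstar} and Definition~\ref{Jstarcloseddef}. Throughout I note that $F_J(T)=\{p\in C: J_*Tp=p\}$, because $J$ is a bijection under the standing assumptions on $E$.

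\emph{Closedness of $F_J(T)$.} Let $p_n\in F_J(T)$ with $p_n\to p$. Then $p\in C$, since $C$ is closed. Moreover $J_*Tp_n=p_n\to p$. By $J_*$-closedness, $p=J_*Tp$, so $Tp=Jp$ and $p\in F_J(T)$.

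\emph{Convexity of $F_J(T)$.} Let $p,q\in F_J(T)$, $t\in(0,1)$, $w=tp+(1-t)q\in C$, and set $v:=J_*Tw\in C$. The definition gives $\Dg(p,v)\le\Dg(p,w)$ and $\Dg(q,v)\le\Dg(q,w)$. Apply the three-point identity (Lemma~\ref{threepoint}) with middle point $w$:
\[
\Dg(p,v)=\Dg(p,w)+\Dg(w,v)+\langle p-w,\nabla g(w)-\nabla g(v)\rangle ,
\]
and similarly for $q$. Multiply the first relation by $t$, the second by $1-t$, and add. The inner-product terms cancel because $t(p-w)+(1-t)(q-w)=0$. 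This yields
\[
t\Dg(p,v)+(1-t)\Dg(q,v)=t\Dg(p,w)+(1-t)\Dg(q,w)+\Dg(w,v).
\]
Combined with the two inequalities, this gives $\Dg(w,v)\le0$, hence $\Dg(w,v)=0$. Strict convexity of $g$ on $U$, which is part of the Legendre property, then forces $v=w$. Concretely, $\Dg(w,v)=0$ means $g$ is affine on $[v,w]$ with slope $\nabla g(v)$, and this contradicts strict convexity unless $v=w$. Therefore $w\in F_J(T)$.

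The main obstacle is this last step. One must check that both $v$ and $w$ lie in $U$ so that the identity applies, which holds since $C\subset U$. Otherwise the step is routine.

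\emph{Equilibrium set.} Closedness and convexity follow from Minty's lemma. Under (E1)--(E4), $\EP(\Theta)=\{x\in C:\Theta(y,x)\le0 \text{ for all } y\in C\}$. The inclusion "$\subset$" uses monotonicity (E2). The inclusion "$\supset$" uses the convex combination $y_t=ty+(1-t)x$, convexity (E4), and the hemicontinuity (E3). The right-hand side is an intersection of lower-level sets of the convex lower semicontinuous functions $\Theta(y,\cdot)$, so it is closed and convex.

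\emph{Variational-inequality set.} Similarly, for norm-continuous monotone $A$, Minty's lemma gives $\VI(C,A)=\{x\in C:\langle y-x,Ay\rangle\ge0 \text{ for all } y\in C\}$. This is an intersection of closed half-spaces with $C$, hence closed and convex.
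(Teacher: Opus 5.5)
Your proposal is correct and follows essentially the same route as the paper: closedness of $F_J(T)$ via $J_*$-closedness, convexity via the barycenter identity (which you derive from the three-point identity) forcing $\Dg(w,J_*Tw)=0$, and the Minty characterization for $\VI(C,A)$. The only differences are in detail, not approach. For $\EP(\Theta)$ you write out the Blum--Oettli Minty-type argument where the paper simply cites the standard result, and you justify $\Dg(w,v)=0\Rightarrow v=w$ via strict convexity of $g$ on $U$, a step the paper leaves implicit.
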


\section{Examples}\label{sec:examples}

\subsection{Negative-entropy setting}

Throughout this subsection, let $E=\mathbb R^{m}$ with its Euclidean norm, fix
$\delta\in(0,1/m)$, and set
\[
C=\left\{x=(x_1,\ldots,x_m)\in\mathbb R^{m}:x_i\geq\delta,\ \sum_{i=1}^{m}x_i=1\right\}.
\]
We use the negative-entropy generator, with the convention $0\log 0:=0$,
\[
g(x)=\sum_{i=1}^{m}x_i\log x_i\quad (x\in\mathbb R^m_+),
\qquad
g(x)=+\infty\quad (x\notin\mathbb R^m_+).
\]
Its interior domain is $U=\mathbb R^m_{++}$ and
\[
\Dg(x,y)=\sum_{i=1}^{m}\left[x_i\log\!\left(\frac{x_i}{y_i}\right)-x_i+y_i\right],
\qquad x\in\dom g,\ y\in U.
\]
When $x,y\in C$, the linear terms cancel, but they need not cancel when the second argument is an
auxiliary Bregman average. Under the Euclidean Riesz identification, $E^*=E$ and $J=J_*=I$;
this fact comes from the norm and is independent of the entropy generator. In contrast,
\[
\nabla g(x)=(1+\log x_i)_{i=1}^{m},
\qquad
\nabla g^{*}(s)=(e^{s_i-1})_{i=1}^{m}.
\]

Let
\[
K_0:=K_1:=K:=[\delta,1]^m\subset U,
\qquad
K^{*}:=[1+\log\delta,1]^m\subset U^{*}.
\]
Then $C\subset K_0$, the set $K^*$ is convex, and
\[
\nabla g(K_0)=K^{*},
\qquad
\nabla g^{*}(K^{*})=K_0.
\]
Moreover, if $v_1,v_2,v_3\in K_0$, then every coordinate of the Bregman mixture
$\Bmix(v_1,v_2,v_3)$ is a weighted geometric mean of three numbers in $[\delta,1]$. Hence
\[
\Bmix(K_0\times K_0\times K_0)\subset K_1.
\]
Thus the set-invariance part of condition \textup{(L1)} in Corollary~\ref{localmainthm} is verified
explicitly. On a neighborhood of $K$, the functions $g$ and $\nabla g$ are smooth; moreover,
\[
\nabla^2g(x)=\operatorname{diag}(x_i^{-1}),
\qquad
\nabla^2g^*(s)=\operatorname{diag}(e^{s_i-1}).
\]
Since $x_i\leq1$ on $K$ and $e^{s_i-1}\geq\delta$ on $K^*$, these Hessians satisfy
\[
\nabla^2g(x)\succeq I \quad (x\in K),
\qquad
\nabla^2g^*(s)\succeq \delta I \quad (s\in K^*).
\]
Thus $g$ is uniformly convex, and hence totally convex, on $K_0\cup K_1=K$, while $g^*$ is
uniformly convex on $K^*$. Since $\nabla g$ and $\nabla g^*$ are smooth on neighborhoods of the
compact sets $K_0$ and $K^*$, respectively, both maps are uniformly continuous there; moreover,
$\nabla g^*$ is bounded on $K^*$. Consequently, conditions \textup{(L1)}--\textup{(L3)} of
Corollary~\ref{localmainthm} hold with the explicitly chosen sets $K_0$, $K_1$, and $K^*$ once the
componentwise constraints are verified.

\begin{example}[A singleton feasibility instance]\label{singletonentropy}
Choose $p:=(1/m,\ldots,1/m)\in C$, fix numbers $\overline\lambda\in(0,1)$ and
\[
0\leq\lambda_j\leq\overline\lambda<1,
\qquad a_k>0,
\qquad b_l>0,
\qquad j,k,l\in\mathbb N,
\]
and define the primal affine map and its representative by
\[
S_jx:=(1-\lambda_j)p+\lambda_jx,
\qquad
T_jx:=J(S_jx),
\]
together with
\[
A_kx:=a_k(x-p),
\qquad
\Theta_l(x,y):=b_l\langle x-p,y-x\rangle.
\]
Under the Euclidean Riesz identification, $J=J_*=I$, so $J_*T_j=S_j$. Each $S_j$ is
continuous, maps $C$ into $C$, and satisfies $F_J(T_j)=\{p\}$. For the entropy generator,
\[
\nabla_y^2\Dg(p,y)=\operatorname{diag}\left(\frac{p_i}{y_i^2}\right)\succ0
\qquad (y\in U),
\]
so $y\mapsto\Dg(p,y)$ is convex on $U$. Jensen's inequality therefore gives
\[
\Dg(p,J_*T_jx)=\Dg(p,S_jx)\leq\lambda_j\Dg(p,x)\leq\Dg(p,x).
\]
Thus every $T_j$ is generalized Bregman $J_*$-nonexpansive and $J_*$-closed. The bifunctions satisfy
$(E1)$--$(E4)$, with
\[
\Theta_l(x,y)+\Theta_l(y,x)=-b_l\|x-y\|^2\leq0.
\]
For this particular reference point $p$, testing the defining equilibrium or
variational-inequality condition with $y=p$ yields
\[
\EP(\Theta_l)=\{p\},
\qquad
\VI(C,A_k)=\{p\}.
\]

For $c,r>0$ and $x\in C$, let
\[
\Phi_{c,r,x}(w):=\frac c2\|w-p\|^2+\frac1r\Dg(w,x),
\qquad w\in C.
\]
This is the potential-generated situation of Proposition~\ref{potentialresolvent}, with
$\Phi(w)=\frac c2\|w-p\|^2$. Hence the two candidate resolvent sets are nonempty singletons, their
resolvent mappings coincide with the displayed minimizer, and the Bregman--Fej\'er estimates follow
from Proposition~\ref{resolventconsequences}. Therefore $\Sol=\{p\}$. The Bregman average has coordinates
\[
(y_n)_i=(x_n)_i^{\alpha_1}(z_n)_i^{\alpha_2}
\left((1-\lambda_{\tau_T(n)})p_i+\lambda_{\tau_T(n)}(u_n)_i\right)^{\alpha_3},
\]
which belong to $[\delta,1]$. Thus Example~\ref{singletonentropy} satisfies the hypotheses of the
localized convergence result, Corollary~\ref{localmainthm}, which yields $x_n\to p$.
\end{example}

\begin{example}[A non-singleton common solution set with distinct countable constraints]
\label{nonsingleentropy}
Let $m=3$, retain the preceding truncated simplex $C$, and write $d:=e_1-e_2$ and
$p:=(1/3,1/3,1/3)$. For $q\in\mathbb N$, set
\[
\varrho_q:=\frac{1-3\delta}{q+1}.
\]
Then $\varrho_q\downarrow0$ and $0<\varrho_q<1-3\delta$. Define the pairwise distinct
closed convex subsets
\[
D_{2q-1}:=\left\{x\in C:x_1-x_2\leq\varrho_q\right\},
\qquad
D_{2q}:=\left\{x\in C:x_2-x_1\leq\varrho_q\right\}.
\]
Set
\[
T_j:=J\circ\Bproj_{D_j},
\qquad
A_kx:=a_k(x_1-x_2)d,
\qquad
\Theta_l(x,y):=b_l(x_1-x_2)\bigl[(y_1-y_2)-(x_1-x_2)\bigr],
\]
where $a_k,b_l>0$. Under the Euclidean Riesz identification, $J=J_*=I$ and hence
$J_*T_j=\Bproj_{D_j}$. Proposition~\ref{projectiongenerated} shows that every $T_j$ is a
$J_*$-closed generalized Bregman $J_*$-nonexpansive mapping and that
\[
F_J(T_j)=D_j.
\]

The strict decrease of $\{\varrho_q\}$ makes the two one-sided subfamilies strictly nested,
and the positive and negative one-sided constraints are different. Thus the family $\{D_j\}_{j\geq1}$
has pairwise distinct members. Its fixed-point constraints have the non-singleton intersection
\[
\bigcap_{j=1}^{\infty}D_j
=\mathcal{F}:=\{x\in C:x_1=x_2\}
=\left\{(s,s,1-2s):\delta\leq s\leq\frac{1-\delta}{2}\right\}.
\]
This is a nondegenerate line segment because $\delta<1/3$. The mappings $A_k$ are norm-continuous
and monotone, since
\[
\langle x-y,A_kx-A_ky\rangle=a_k\bigl((x_1-x_2)-(y_1-y_2)\bigr)^2\geq0.
\]
The bifunctions satisfy $(E1)$--$(E4)$, and
\[
\Theta_l(x,y)+\Theta_l(y,x)
=-b_l\bigl((x_1-x_2)-(y_1-y_2)\bigr)^2\leq0.
\]
For the particular point $p\in \mathcal{F}$, testing the defining condition with $y=p$
shows the reverse inclusions
\[
\VI(C,A_k)\subset \mathcal{F},
\qquad
\EP(\Theta_l)\subset \mathcal{F}.
\]
Together with the immediate inclusions $\mathcal{F}\subset\VI(C,A_k)$ and
$\mathcal{F}\subset\EP(\Theta_l)$, this gives
\[
\VI(C,A_k)=\mathcal{F},
\qquad
\EP(\Theta_l)=\mathcal{F}
\]
for every $k,l$. Indeed, the two tests yield, respectively,
$-a_k(x_1-x_2)^2\geq0$ and $-b_l(x_1-x_2)^2\geq0$.

For $c,r>0$ and $x\in C$, define
\[
\Psi_{c,r,x}(w):=\frac c2(w_1-w_2)^2+\frac1r\Dg(w,x),
\qquad w\in C.
\]
This strictly convex continuous functional has a unique minimizer. Its optimality condition is exactly
the variational-inequality resolvent condition for $A^{(c)}w=c(w_1-w_2)d$ and, equivalently, the
equilibrium-resolvent condition for
\[
\Theta^{(c)}(w,y)=c(w_1-w_2)\bigl[(y_1-y_2)-(w_1-w_2)\bigr].
\]
This is again covered by Proposition~\ref{potentialresolvent}, now with
$\Phi(w)=\frac c2(w_1-w_2)^2$. Consequently every componentwise candidate resolvent set is a
nonempty singleton and Proposition~\ref{resolventconsequences} supplies the fixed-point identities
and Bregman--Fej\'er estimates. Therefore
\[
\Sol=\mathcal{F}.
\]
Because $J_*T_j(C)=\Bproj_{D_j}(C)\subset C$ and both resolvents map $C$ into $C$, all three
primal arguments in each Bregman average belong to $C$. Their coordinatewise weighted geometric mean
belongs to $K$, so the hypotheses of the localized convergence result, Corollary~\ref{localmainthm}, hold. Consequently, for
every $x\in C$, the algorithm converges strongly to the entropy-Bregman projection $\Bproj_{\mathcal{F}}x$. This
example contains countably many distinct fixed-point constraints and a non-singleton common solution
set.
\end{example}

\section{Proofs}\label{sec:proofs}

\subsection{Proofs of the preliminary results}

We first prove the preliminary statements collected in Section~\ref{sec:prelim}. They provide the Bregman
projection properties, resolvent estimates, and closed-convexity facts used in the outer-approximation
argument.

\noindent\textbf{Proof of Lemma~\ref{BprojExistence}.}
Fix $x\in U$. Since
\[
\Dg(z,x)=g(z)-\langle\nabla g(x),z\rangle
+\bigl\langle\nabla g(x),x\bigr\rangle-g(x),
\]
strong coercivity of $g$ implies that $z\mapsto\Dg(z,x)$ is coercive on $D$. Let $\{z_n\}\subset D$
be a minimizing sequence. It is bounded, and reflexivity yields a weakly convergent subsequence,
still denoted by $\{z_n\}$, with $z_n\rightharpoonup z$. Since $D$ is norm closed and convex, it is
weakly closed; hence $z\in D$. The convex lower semicontinuity of $g$ implies that
$z\mapsto\Dg(z,x)$ is weakly lower semicontinuous. Therefore $z$ minimizes $\Dg(\cdot,x)$ on $D$.
Finally, the essential strict convexity of the Legendre function $g$ makes
$\Dg(\cdot,x)$ strictly convex on $D\subset U$, so the minimizer is unique. This is the standard
direct-method proof of the Bregman projection theorem; see also \cite{BauschkeBorwein1997}.

\noindent\hfill$\square$

\noindent\textbf{Proof of Lemma~\ref{convexdual}.}
Put $s_i:=\nabla g(x_i)$ and $\bar s:=\alpha_1s_1+\alpha_2s_2+\alpha_3s_3$. Since $U^{*}$ is convex,
$\bar s\in U^{*}$ and $y=\nabla g^{*}(\bar s)\in U$. The Fenchel identity gives
\[
\Dg(p,\nabla g^{*}(s))=g(p)+g^{*}(s)-\langle p,s\rangle,
\qquad s\in U^{*}.
\]
Consequently, the first inequality follows directly from the convexity of $g^{*}$.

For the strict estimate, choose a bounded convex set $\widehat{\mathcal{V}}^{*}\subset U^{*}$ containing
$\mathcal{V}^{*}$. By the uniform convexity of $g^{*}$ on $\widehat{\mathcal{V}}^{*}$, there is a modulus
$\rho_{\mathcal{V}^{*}}$ with the stated positivity property such that, for $s,t\in \mathcal{V}^{*}$ and
$\lambda\in[0,1]$,
\[
g^{*}(\lambda s+(1-\lambda)t)
\leq \lambda g^{*}(s)+(1-\lambda)g^{*}(t)
-\lambda(1-\lambda)\rho_{\mathcal{V}^{*}}(\|s-t\|).
\]
Apply this inequality first to $s_1$ and $s_3$ with
$\lambda=\alpha_1/(\alpha_1+\alpha_3)$, and then use ordinary convexity to incorporate $s_2$.
Since $\alpha_1+\alpha_3=1-\alpha_2$, this yields the exact coefficient
\[
g^{*}(\bar s)
\leq \sum_{i=1}^{3}\alpha_i g^{*}(s_i)
-\frac{\alpha_1\alpha_3}{\alpha_1+\alpha_3}\rho_{\mathcal{V}^{*}}(\|s_1-s_3\|).
\]
(In particular, because $\alpha_1+\alpha_3\leq1$, this is stronger than the
corresponding estimate with coefficient $\alpha_1\alpha_3$.)
Adding $g(p)-\langle p,\bar s\rangle$ proves \eqref{strictdual}.

\noindent\hfill$\square$

\noindent\textbf{Proof of Lemma~\ref{dualresidual}.}
The union of the two sequences is a bounded subset of $U^{*}$. The conclusion follows directly from
uniform continuity of $\nabla g^{*}$ on that set.

\noindent\hfill$\square$

\noindent\textbf{Proof of Proposition~\ref{projectiongenerated}.}
The identity $J_*T_D=\Bproj_D$ follows from $J_*=J^{-1}$. If $p\in D$, then
$\Bproj_Dp=p$, because $\Dg(p,p)=0$ is the unique minimum over $D$; hence
$T_Dp=Jp$. Conversely, if $T_Dp=Jp$, then $\Bproj_Dp=p$, and since
$\Bproj_Dp\in D$, we obtain $p\in D$. Thus $F_J(T_D)=D$.

For $p\in D$ and $x\in C$, the Bregman projection inequality gives
\[
\Dg(p,J_*T_Dx)=\Dg(p,\Bproj_Dx)\leq \Dg(p,x),
\]
so $T_D$ is generalized Bregman $J_*$-nonexpansive.

It remains to prove $J_*$-closedness. Let $x_n\to x$ in $C$ and suppose that
$\Bproj_Dx_n\to y$. Since $D$ is closed, $y\in D$. For every $v\in D$, the
projection variational inequality gives
\[
\langle v-\Bproj_Dx_n,\nabla g(x_n)-\nabla g(\Bproj_Dx_n)\rangle\leq0.
\]
The continuity of $\nabla g$ permits passage to the limit, yielding
\[
\langle v-y,\nabla g(x)-\nabla g(y)\rangle\leq0,
\qquad v\in D.
\]
By the three-point identity, this implies
\[
\Dg(v,x)\geq \Dg(y,x),\qquad v\in D.
\]
The uniqueness of the Bregman projection therefore gives $y=\Bproj_Dx$. Hence
$J_*T_D=\Bproj_D$ is closed, which proves the claim.

\noindent\hfill$\square$

\noindent\textbf{Proof of Proposition~\ref{resolventconsequences}.}
We first consider the equilibrium candidate set. Let $z_1,z_2\in\mathcal Z_r^{\Theta}(x)$.
Testing the defining inequality for $z_1$ at $y=z_2$ and the one for $z_2$ at $y=z_1$, then adding,
gives
\[
0\leq
\Theta(z_1,z_2)+\Theta(z_2,z_1)
-\frac1r\langle z_1-z_2,\nabla g(z_1)-\nabla g(z_2)\rangle.
\]
The first two terms are nonpositive by \textup{(E2)}, whereas
\[
\langle z_1-z_2,\nabla g(z_1)-\nabla g(z_2)\rangle
=\Dg(z_1,z_2)+\Dg(z_2,z_1)\geq0.
\]
Thus both Bregman distances vanish, and the strict convexity associated with a Legendre generator
implies $z_1=z_2$.

If $w\in\EP(\Theta)$, then $w\in\mathcal Z_r^{\Theta}(w)$, and hence $R_r^{\Theta}w=w$.
Conversely, if $R_r^{\Theta}w=w$, then the defining inequality of
$\mathcal Z_r^{\Theta}(w)$ reduces to $\Theta(w,y)\geq0$ for all $y\in C$.
This proves the fixed-point identity. Now let $p\in\EP(\Theta)$ and put $z=R_r^{\Theta}x$.
Using $y=p$ in the defining inequality gives
\[
\Theta(z,p)+\frac1r\langle p-z,\nabla g(z)-\nabla g(x)\rangle\geq0.
\]
Since $\Theta(p,z)\geq0$ and \textup{(E2)} holds, $\Theta(z,p)\leq0$. Hence
\[
\langle p-z,\nabla g(z)-\nabla g(x)\rangle\geq0.
\]
The three-point identity yields \eqref{resineq1}.

For the variational inequality candidate set, let $z_1,z_2\in\mathcal W_r^A(x)$.
The analogous two tests give
\[
0\leq
-\langle z_1-z_2,Az_1-Az_2\rangle
-\frac1r\langle z_1-z_2,\nabla g(z_1)-\nabla g(z_2)\rangle.
\]
Monotonicity of $A$ and the preceding Bregman identity again force $z_1=z_2$.
The fixed-point identity follows immediately by setting $x=z$ in \eqref{VIresolvent}.
Finally, let $q\in\VI(C,A)$ and put $z=Q_r^Ax$. Monotonicity gives
\[
\langle z-q,Az\rangle
=\langle z-q,Aq\rangle+\langle z-q,Az-Aq\rangle\geq0.
\]
Taking $y=q$ in \eqref{VIresolvent} therefore yields
\[
\langle q-z,\nabla g(z)-\nabla g(x)\rangle\geq0,
\]
and the three-point identity gives \eqref{resineq2}.

\noindent\hfill$\square$

\noindent\textbf{Proof of Proposition~\ref{standardresolvent}.}
For the equilibrium candidate set, solvability is the full-domain Bregman-resolvent theorem of
Chen--Bi--Su \cite[Lemmas~2.13--2.14]{ChenBiSu2015}; see also the related Bregman resolvent framework
in Reich and Sabach \cite{ReichSabach2010}. For the variational inequality candidate set, use the
same result for the monotone perturbation $G_A(z,y)=\langle Az,y-z\rangle$, as in
Chen--Bi--Su \cite[Lemma~2.15]{ChenBiSu2015}. Proposition~\ref{resolventconsequences} then applies.

\noindent\hfill$\square$

\noindent\textbf{Proof of Proposition~\ref{potentialresolvent}.}
The function $w\mapsto r\Phi(w)+\Dg(w,x)$ is continuous and strictly convex on the compact convex
set $C$. It therefore has a unique minimizer, say $z$. Its variational optimality condition is
\[
\langle y-z,r\nabla\Phi(z)+\nabla g(z)-\nabla g(x)\rangle\geq0,
\qquad y\in C.
\]
After division by $r$, this is both the condition defining $\mathcal W_r^{A_{\Phi}}(x)$ and the one
defining $\mathcal Z_r^{\Theta_{\Phi}}(x)$. The remaining assertions follow because the gradient of a
convex $C^1$ function is monotone. The map $\Theta_{\Phi}$ is continuous in its first variable,
so \textup{(E3)} holds, and $\Theta_{\Phi}(z,\cdot)$ is affine and therefore convex and lower
semicontinuous. Finally,
\[
\Theta_{\Phi}(z,y)+\Theta_{\Phi}(y,z)
=-\langle z-y,\nabla\Phi(z)-\nabla\Phi(y)\rangle\leq0.
\]

\noindent\hfill$\square$

\noindent\textbf{Proof of Proposition~\ref{setstructure}.}
To prove the assertion for $F_J(T)$, let $p,q\in F_J(T)$ and set
$w_t:=tp+(1-t)q$ for some $t\in[0,1]$. Generalized Bregman $J_*$-nonexpansivity gives
\[
t\Dg(p,J_*T w_t)+(1-t)\Dg(q,J_*T w_t)
\leq t\Dg(p,w_t)+(1-t)\Dg(q,w_t).
\]
The elementary barycenter identity
\[
t\Dg(p,w)+(1-t)\Dg(q,w)
=t\Dg(p,w_t)+(1-t)\Dg(q,w_t)+\Dg(w_t,w)
\]
shows that $\Dg(w_t,J_*T w_t)=0$, and therefore $J_*T w_t=w_t$. Applying $J$ gives
$T w_t=Jw_t$, so $w_t\in F_J(T)$. Thus $F_J(T)$ is convex. If $p_n\in F_J(T)$ and $p_n\to p$, then $J_*Tp_n=p_n\to p$; $J_*$-closedness yields $J_*Tp=p$, and hence $Tp=Jp$. Thus $F_J(T)$ is closed.

Under $(E1)$--$(E4)$, the equilibrium solution set $\EP(\Theta)$ is closed and convex; this is the
standard equilibrium-set theorem, see, for example, \cite{TakahashiZembayashi2008,CombettesHirstoaga2005}.

For the variational inequality set, monotonicity and norm continuity give the Minty characterization
\[
\VI(C,A)=\bigl\{x\in C:\ \langle y-x,Ay\rangle\geq0\ \text{for every }y\in C\bigr\}.
\]
Indeed, one implication follows directly from monotonicity. For the converse, apply the displayed
inequality at $x+t(y-x)$ and let $t\downarrow0$, using norm continuity of $A$. The right-hand side
is an intersection of closed affine half-spaces in $x$, so $\VI(C,A)$ is closed and convex.

\noindent\hfill$\square$

\subsection{Well-definedness and the outer approximation}

\noindent\textbf{Proof of Lemma~\ref{welldefined}.}
For each $n$, the points $x_n$, $z_n$, and $J_*\widehat T_nu_n$ belong to $C\subset U$. Hence their gradients
belong to the convex set $U^{*}=\intdom g^{*}$. The convex combination in \eqref{alg} therefore lies
in $U^{*}$, so $y_n=\nabla g^{*}(\cdot)$ is well defined and belongs to $U$.

Clearly, $C_1=C$ is nonempty, closed, and convex. Suppose that $C_n$ is nonempty, closed, and convex
and that $\Sol\subset C_n$. By the definition of $\Dg$, the inequality
$\Dg(z,y_n)\leq \Dg(z,x_n)$ is equivalent to
\[
\langle z,\nabla g(x_n)-\nabla g(y_n)\rangle
\leq g(y_n)-g(x_n)+\langle x_n,\nabla g(x_n)\rangle-
\langle y_n,\nabla g(y_n)\rangle,
\]
which is affine in $z$. Thus $C_{n+1}$ is closed and convex. Let $p\in \Sol$. Since
$p\in\bigcap_{j\geq1}F_J(T_j)$, the point $p$ belongs to $F_J(\widehat T_n)$. Using
Lemma~\ref{convexdual}, Proposition~\ref{resolventconsequences}, and the generalized Bregman
$J_*$-nonexpansivity of $\widehat T_n$, we obtain
\begin{align*}
\Dg(p,y_n)
&\leq \alpha_1\Dg(p,x_n)+\alpha_2\Dg(p,z_n)
 +\alpha_3\Dg(p,J_*\widehat T_nu_n)\\
&\leq \alpha_1\Dg(p,x_n)+\alpha_2\Dg(p,z_n)+\alpha_3\Dg(p,u_n)\\
&\leq \Dg(p,x_n).
\end{align*}
Hence $p\in C_{n+1}$; in particular, $C_{n+1}$ is nonempty. The Bregman projection
$\Bproj_{C_{n+1}}x$ therefore exists and is unique. Induction completes the proof.

\noindent\hfill$\square$

\subsection{Proof of the main convergence theorem}

\noindent\textbf{Proof of Theorem~\ref{mainthm}.}
By Lemma~\ref{welldefined}, every iterate is well defined and $\Sol\subset C_n$ for all $n\geq1$.
The proof is divided into five steps.

\noindent\textbf{Step 1.} We show that $\{x_n\}$ converges strongly to some point $x^{*}\in C$.

Since $x_n=\Bproj_{C_n}x$ and $\Sol\subset C_n$, Lemma \ref{Bprojlemma} gives
\[
\Dg(x_n,x)\leq \Dg(p,x),\qquad p\in \Sol.
\]
Thus $\{\Dg(x_n,x)\}$ is bounded. By the forward-sublevel part of $(G5)$, the sequence $\{x_n\}$ is bounded. Also,
$x_{n+1}\in C_{n+1}\subset C_n$ and $x_n=\Bproj_{C_n}x$, so
\[
\Dg(x_n,x)\leq \Dg(x_{n+1},x),
\]
which implies that $\{\Dg(x_n,x)\}$ is nondecreasing and bounded. Therefore,
$\lim_{n\to\infty}\Dg(x_n,x)$ exists.

For $m>n$, since $x_m\in C_m\subset C_n$, Lemma \ref{Bprojlemma} yields
\begin{equation}\label{cauchyD}
\Dg(x_m,x_n)\leq \Dg(x_m,x)-\Dg(x_n,x)\to0,
\qquad m,n\to\infty.
\end{equation}
By total convexity of $g$ on bounded subsets of $U$, we have $\|x_m-x_n\|\to0$. Hence $\{x_n\}$ is a
Cauchy sequence. Since $C$ is closed, there exists $x^{*}\in C$ such that
\[
x_n\to x^{*}\qquad \text{as }n\to\infty.
\]

\noindent\textbf{Step 2.} We show that $z_n\to x^{*}$, $u_n\to x^{*}$ and $y_n\to x^{*}$.

Fix $p\in \Sol$. Since $x_n\to x^{*}$, the continuity of $\Dg(p,\cdot)$ implies that
$\{\Dg(p,x_n)\}$ is bounded. The resolvent inequalities and generalized Bregman
$J_*$-nonexpansivity give
\[
\Dg(p,z_n)\leq \Dg(p,x_n),\qquad
\Dg(p,u_n)\leq \Dg(p,x_n),\qquad
\Dg(p,J_*\widehat T_nu_n)\leq \Dg(p,u_n).
\]
The reverse-sublevel part of $(G5)$ therefore bounds $\{z_n\}$, $\{u_n\}$, and
$\{J_*\widehat T_nu_n\}$. By the gradient-boundedness clause in $(G5)$, the four dual sequences
\[
\{\nabla g(x_n)\},\qquad \{\nabla g(z_n)\},\qquad
\{\nabla g(u_n)\},\qquad \{\nabla g(J_*\widehat T_nu_n)\}
\]
are bounded in $E^{*}$. Hence
\[
\xi_n:=\alpha_1\nabla g(x_n)+\alpha_2\nabla g(z_n)+\alpha_3\nabla g(J_*\widehat T_nu_n)
\]
is a bounded sequence in $U^{*}$. The final clause of $(G5)$ gives the boundedness of
$y_n=\nabla g^{*}(\xi_n)$.

Since $x_{n+1}\in C_{n+1}$, we have
\[
\Dg(x_{n+1},y_n)\leq \Dg(x_{n+1},x_n).
\]
By \eqref{cauchyD}, $\Dg(x_{n+1},x_n)\to0$. Since both sequences are now known to be bounded,
total convexity yields
\[
y_n\to x^{*}.
\]
By Proposition~\ref{resolventconsequences},
\begin{equation}\label{uDineq}
\Dg(u_n,x_n)\leq \Dg(p,x_n)-\Dg(p,u_n),
\end{equation}
and
\begin{equation}\label{zDineq}
\Dg(z_n,x_n)\leq \Dg(p,x_n)-\Dg(p,z_n).
\end{equation}
Moreover, the outer-approximation estimate in the proof of Lemma~\ref{welldefined} gives
\[
\Dg(p,y_n)\leq \alpha_1\Dg(p,x_n)+\alpha_2\Dg(p,z_n)+\alpha_3\Dg(p,u_n)
\leq \Dg(p,x_n).
\]
Since $x_n\to x^{*}$ and $y_n\to x^{*}$, the continuity of $y\mapsto\Dg(p,y)$ on $U$ gives
\[
\Dg(p,x_n)-\Dg(p,y_n)\to0.
\]
The resolvent inequalities make both differences
$\Dg(p,x_n)-\Dg(p,z_n)$ and $\Dg(p,x_n)-\Dg(p,u_n)$ nonnegative, while the preceding
convex-combination estimate implies
\[
\begin{aligned}
0&\leq \alpha_2\bigl[\Dg(p,x_n)-\Dg(p,z_n)\bigr]
   +\alpha_3\bigl[\Dg(p,x_n)-\Dg(p,u_n)\bigr]\\
&\leq \Dg(p,x_n)-\Dg(p,y_n)\to0.
\end{aligned}
\]
Since $\alpha_2,\alpha_3>0$, it follows that
\[
\Dg(p,x_n)-\Dg(p,u_n)\to0,
\qquad
\Dg(p,x_n)-\Dg(p,z_n)\to0.
\]
By \eqref{uDineq} and \eqref{zDineq},
\[
\Dg(u_n,x_n)\to0,
\qquad
\Dg(z_n,x_n)\to0.
\]
Using total convexity again, we obtain
\[
u_n\to x^{*},\qquad z_n\to x^{*}.
\]

\noindent\textbf{Step 3.} We prove that $x^{*}\in\bigcap_{k=1}^{\infty}\VI(C,A_k)$.

Since $u_n=Q_{r_n}^{\widehat A_n}x_n$, we have
\begin{equation}\label{VIstep}
\langle y-u_n,\widehat A_nu_n\rangle+\frac1{r_n}\langle y-u_n,\nabla g(u_n)-\nabla g(x_n)\rangle\geq0,
\qquad \forall y\in C.
\end{equation}
From $u_n\to x^{*}$ and $x_n\to x^{*}$, and since $\nabla g$ is uniformly continuous on bounded
subsets of $U$ by $(G1)$--$(G2)$, we get
\[
\frac{\|\nabla g(u_n)-\nabla g(x_n)\|}{r_n}\to0.
\]
Fix $k\in\mathbb N$. The control condition for $\tau_A$ guarantees an infinite subsequence
$\{n_j^{(k)}\}$ such that $\widehat A_{n_j^{(k)}}=A_k$ for every $j$. Let $v\in C$. Monotonicity and
\eqref{VIstep}, with $y=v$, give
\[
\begin{aligned}
\langle v-u_{n_j^{(k)}},A_kv\rangle
&=\langle v-u_{n_j^{(k)}},A_ku_{n_j^{(k)}}\rangle
 +\langle v-u_{n_j^{(k)}},A_kv-A_ku_{n_j^{(k)}}\rangle\\
&\geq \langle v-u_{n_j^{(k)}},A_ku_{n_j^{(k)}}\rangle\\
&\geq -\frac1{r_{n_j^{(k)}}}\langle v-u_{n_j^{(k)}},
\nabla g(u_{n_j^{(k)}})-\nabla g(x_{n_j^{(k)}})\rangle.
\end{aligned}
\]
The sequence $\{v-u_{n_j^{(k)}}\}$ is bounded, whereas the norm of the final gradient difference
divided by $r_{n_j^{(k)}}$ tends to zero. Passing to the limit therefore gives
\[
\langle v-x^{*},A_kv\rangle\geq0,
\qquad \forall v\in C.
\]
For $y\in C$ and $t\in(0,1]$, set $v_t:=x^{*}+t(y-x^{*})\in C$. Then
\[
0\leq \langle v_t-x^{*},A_kv_t\rangle
=t\langle y-x^{*},A_kv_t\rangle.
\]
By the assumed norm continuity of $A_k$, we have $A_kv_t\to A_kx^{*}$ in $E^{*}$ as
$t\downarrow0$. The duality pairing with the fixed vector $y-x^{*}$ is continuous; hence
\[
\langle y-x^{*},A_kx^{*}\rangle\geq0,
\qquad \forall y\in C.
\]
Thus $x^{*}\in\VI(C,A_k)$. Since $k$ was arbitrary,
\[
x^{*}\in\bigcap_{k=1}^{\infty}\VI(C,A_k).
\]

\noindent\textbf{Step 4.} We prove that $x^{*}\in\bigcap_{j=1}^{\infty}F_J(T_j)$.

Let $\mathcal{V}^{*}\subset U^{*}$ be a bounded convex set containing the four gradient sequences
\[
\{\nabla g(x_n)\},\qquad \{\nabla g(z_n)\},\qquad
\{\nabla g(u_n)\},\qquad \{\nabla g(J_*\widehat T_nu_n)\}.
\]
The existence of such a set follows from the boundedness established in Step~2. Using the strict form \eqref{strictdual} of Lemma~\ref{convexdual}, the outer-approximation
estimate, and Proposition~\ref{resolventconsequences}, we obtain
\[
\Dg(p,y_n)\leq \Dg(p,x_n)-\frac{\alpha_1\alpha_3}{\alpha_1+\alpha_3}
\rho_{\mathcal{V}^{*}}(\|\nabla g(x_n)-\nabla g(J_*\widehat T_nu_n)\|).
\]
Since $\Dg(p,x_n)-\Dg(p,y_n)\to0$, the positivity property of $\rho_{\mathcal{V}^{*}}$ implies
\begin{equation}\label{residual1}
\|\nabla g(x_n)-\nabla g(J_*\widehat T_nu_n)\|\to0.
\end{equation}
Since $u_n-x_n\to0$ and $\nabla g$ is uniformly continuous on bounded subsets of $U$ by $(G1)$--$(G2)$,
\[
\|\nabla g(u_n)-\nabla g(x_n)\|\to0.
\]
Combining this with \eqref{residual1}, we obtain
\begin{equation}\label{residual2}
\|\nabla g(u_n)-\nabla g(J_*\widehat T_nu_n)\|\to0.
\end{equation}

Fix $j\in\mathbb N$. The recurrence of $\tau_T$ provides an infinite subsequence
$\{n_q^{(j)}\}$ for which $\widehat T_{n_q^{(j)}}=T_j$ for every $q$. Hence
\[
\|\nabla g(u_{n_q^{(j)}})-\nabla g(J_*T_j u_{n_q^{(j)}})\|\to0.
\]
Both dual sequences lie in the bounded set $\mathcal{V}^{*}$. Lemma~\ref{dualresidual} therefore yields
\[
\|u_{n_q^{(j)}}-J_*T_j u_{n_q^{(j)}}\|\to0.
\]
Since $u_{n_q^{(j)}}\to x^{*}$, it follows that $J_*T_j u_{n_q^{(j)}}\to x^{*}$. Because $T_j$ is
$J_*$-closed, we have $J_*T_jx^{*}=x^{*}$. Applying $J$ yields $T_jx^{*}=Jx^{*}$; hence
$x^{*}\in F_J(T_j)$. Since $j$ was arbitrary,
\[
x^{*}\in\bigcap_{j=1}^{\infty}F_J(T_j).
\]

\noindent\textbf{Step 5.} We show that $x^{*}\in\bigcap_{l=1}^{\infty}\EP(\Theta_l)$.

Since $z_n=R_{r_n}^{\widehat\Theta_n}x_n$, we have
\begin{equation}\label{EPstep}
\widehat\Theta_n(z_n,y)+\frac1{r_n}\langle y-z_n,\nabla g(z_n)-\nabla g(x_n)\rangle\geq0,
\qquad \forall y\in C.
\end{equation}
Since $z_n\to x^{*}$, $x_n\to x^{*}$, and $r_n\geq r_0>0$, the continuity of $\nabla g$ yields
\[
\frac{\|\nabla g(z_n)-\nabla g(x_n)\|}{r_n}\to0.
\]
Fix $l\in\mathbb N$. The control condition for $\tau_\Theta$ provides an infinite subsequence
$\{n_j^{(l)}\}$ such that $\widehat\Theta_{n_j^{(l)}}=\Theta_l$ for every $j$. By the monotonicity of
$\Theta_l$, \eqref{EPstep} gives
\[
\Theta_l(y,z_{n_j^{(l)}})
\leq \frac1{r_{n_j^{(l)}}}\langle y-z_{n_j^{(l)}},
\nabla g(z_{n_j^{(l)}})-\nabla g(x_{n_j^{(l)}})\rangle,
\qquad \forall y\in C.
\]
The right-hand side tends to zero as $j\to\infty$. Since $z_{n_j^{(l)}}\to x^{*}$ and
$\Theta_l(y,\cdot)$ is lower semicontinuous on $C$, we obtain
\[
\Theta_l(y,x^{*})
\leq\liminf_{j\to\infty}\Theta_l(y,z_{n_j^{(l)}})
\leq\limsup_{j\to\infty}\Theta_l(y,z_{n_j^{(l)}})
\leq0,
\qquad \forall y\in C.
\]
For $y\in C$ and $t\in(0,1]$, set $v_t:=x^{*}+t(y-x^{*})\in C$. The preceding inequality with
$y$ replaced by $v_t$ gives $\Theta_l(v_t,x^{*})\leq0$. Therefore, by $(E1)$ and the convexity part
of $(E4)$,
\[
0=\Theta_l(v_t,v_t)
\leq t\Theta_l(v_t,y)+(1-t)\Theta_l(v_t,x^{*})
\leq t\Theta_l(v_t,y).
\]
Thus $\Theta_l(v_t,y)\geq0$. Applying $(E3)$ with $x=x^{*}$ and $z=y$ gives
\[
0\leq\limsup_{t\downarrow0}\Theta_l(v_t,y)\leq\Theta_l(x^{*},y).
\]
Hence $\Theta_l(x^{*},y)\geq0$ for every $y\in C$, so $x^{*}\in\EP(\Theta_l)$. Since $l$ was
arbitrary,
\[
x^{*}\in\bigcap_{l=1}^{\infty}\EP(\Theta_l).
\]

From Steps 3--5, $x^{*}\in \Sol$.

Finally, since $\Bproj_{\Sol}x\in \Sol\subset C_n$ and $x_n=\Bproj_{C_n}x$, we have
\[
\Dg(x_n,x)\leq \Dg(\Bproj_{\Sol}x,x),
\]
and, since $\Dg(\cdot,x)$ is continuous on $U$, by passing to the limit,
\begin{equation}\label{final1}
\Dg(x^{*},x)\leq \Dg(\Bproj_{\Sol}x,x).
\end{equation}
On the other hand, $x^{*}\in \Sol$, and so the definition of $\Bproj_{\Sol}x$ gives
\begin{equation}\label{final2}
\Dg(\Bproj_{\Sol}x,x)\leq \Dg(x^{*},x).
\end{equation}
From \eqref{final1} and \eqref{final2}, $x^{*}$ is a Bregman projection of $x$ onto $\Sol$. By uniqueness
of the Bregman projection, $x^{*}=\Bproj_{\Sol}x$. This completes the proof.

\noindent\hfill$\square$

\subsection{Proof of the localized convergence theorem}

\noindent\textbf{Proof of Corollary~\ref{localmainthm}.}
Because $g$ is Legendre, the gradient bijection
\[
\nabla g:U\longrightarrow U^{*},\qquad (\nabla g)^{-1}=\nabla g^{*},
\]
together with the three-point identity and the Bregman-projection characterization used in
Theorem~\ref{mainthm}, remains available. The componentwise resolvents and every mapping
$J_*T_j$ send $C$ into $C$. Consequently,
\[
x_n,\ z_n,\ u_n,\ J_*\widehat T_nu_n\in C\subset K_0
\]
for every $n$, while (L1) gives $y_n\in K_1$. Since $E$ is reflexive and $C$ is bounded, every
nonempty closed convex subset of $C$ is weakly compact. The weak lower semicontinuity and strict
convexity of $z\mapsto\Dg(z,x)$ therefore give the existence and uniqueness of each Bregman
projection used in the outer-approximation construction.

We now identify the local substitutes for the global generator assumptions used in the proof of
Theorem~\ref{mainthm}. First, the boundedness conclusions obtained there from the forward and reverse
sublevel clauses of (G5) follow here directly from
\[
x_n,\ z_n,\ u_n,\ J_*\widehat T_nu_n\in K_0,
\qquad y_n\in K_1.
\]
Second, every passage from vanishing Bregman distance to norm convergence uses pairs of points from
$K_0\cup K_1$ and is therefore justified by the sequential total-convexity clause in (L2), which
replaces (G3) on the generated region. Third, all dual variables entering the Bregman mixture belong
to the bounded convex set $K^{*}$ by (L1). Hence the strict dual convexity estimate used to obtain the
fixed-point residual follows from the uniform convexity of $g^{*}$ on $K^{*}$ in (L3), replacing the
corresponding use of (G4). Finally, the conversions of primal and dual residuals use only the uniform
continuity of $\nabla g$ on $K_0$ and of $\nabla g^{*}$ on $K^{*}$, as supplied by (L2)--(L3).
Thus each estimate in the proof of Theorem~\ref{mainthm} is valid on the invariant bounded sets
specified in (L1)--(L3), and the same argument yields $x_n\to\Bproj_{\Sol}x$.

\noindent\hfill$\square$

\section{Conclusion}

The paper establishes strong convergence of a Bregman outer-approximation scheme for a common-point problem with three countable constraint blocks. The convergence analysis does not require an NST-type compatibility condition for the fixed-point family. The Hilbert-space and Alber-functional corollaries provide special cases of the main result in this Bregman framework. 

\section*{Statements and declarations}
\noindent\textbf{Competing interests.} The author declares no competing interests.

\noindent\textbf{Funding.} The author received no specific funding for this work.

\noindent\textbf{Data availability.} No datasets were generated or analyzed during the current theoretical study.

\noindent\textbf{Code availability.} No computer code was generated or analyzed for the present theoretical work.

\end{document}